\documentclass[11pt,a4paper]{amsart}
\usepackage[top=1.33in, bottom=1.33in, left=1.25in, right=1.25in]{geometry}
\usepackage{amsmath, amsxtra, amsthm, amsfonts, amssymb, mathtools}
\usepackage{mathrsfs}
\usepackage{graphicx}
\usepackage{url}
\usepackage[dvipsnames]{xcolor}
\usepackage{bbm}
\usepackage{bm}
\usepackage{tikz-cd}
\usepackage{tikz}
\usetikzlibrary{backgrounds}
\usepackage[cal=boondoxo,scr=euler]{mathalfa}
\usepackage{float}
\usetikzlibrary{decorations.pathreplacing, positioning}
\usetikzlibrary{decorations.markings}
\usetikzlibrary{shapes,positioning,intersections,quotes}
\usetikzlibrary{shapes,fit}
\usetikzlibrary{arrows.meta, calc}
\definecolor{curvyarrow}{RGB}{200, 100, 150}
\usepackage{comment}
\usepackage[shortlabels]{enumitem} 
\usepackage{enumitem}
\usepackage{relsize}
\usepackage{stmaryrd}

\usepackage[all,cmtip]{xy}	
\xyoption{arrow}
\usepackage{mlmodern} 
\usepackage[T1]{fontenc}
\usepackage{enumitem}

\usepackage[backref=page,linktocpage]{hyperref} 
\hypersetup{
    colorlinks,
    allcolors=teal
}

\usepackage[capitalize]{cleveref}

\newtheoremstyle{results}
{8pt}
{8pt}
{\itshape}
{}
{\bfseries}
{}
{.5em}
{}
\theoremstyle{results}
\newtheorem{thm}{Theorem}[section]

\newtheorem{corollary}[thm]{Corollary}
\newtheorem{lemma}[thm]{Lemma}
\newtheorem{maintheorem}[thm]{Theorem}
\newtheorem{claim}[thm]{Claim}
\newtheoremstyle{definitions}
{7pt}
{7pt}
{}
{}
{\bfseries}
{}
{.5em}
{}

\theoremstyle{definitions}
\newtheorem{definition}[thm]{Definition}
\newtheorem{conjecture}[thm]{Conjecture}

\newtheorem{question}[thm]{Question}

\crefname{thm}{theorem}{theorems}
\Crefname{thm}{Theorem}{Theorems}
\crefname{lemma}{lemma}{lemmas}
\Crefname{lemma}{Lemma}{Lemmas}
\crefname{proposition}{proposition}{propositions}
\Crefname{proposition}{Proposition}{Propositions}
\crefname{corollary}{corollary}{corollaries}
\Crefname{corollary}{Corollary}{Corollaries}
\crefname{definition}{definition}{definitions}
\Crefname{definition}{Definition}{Definitions}
\crefname{conjecture}{conjecture}{conjectures}
\Crefname{conjecture}{Conjecture}{Conjectures}
\crefname{example}{example}{examples}
\Crefname{example}{Example}{Examples}
\crefname{remark}{remark}{remarks}
\Crefname{remark}{Remark}{Remarks}

\def\Z{\mathbb{Z}}

\def\P{\mathbb{P}}

\def\Z{\mathbb{Z}}

\newcommand{\abs}[1]{\left\lvert#1\right\rvert}

\newcommand{\rk}{\operatorname{rk}}

\renewcommand{\L}{\mathcal{L}}
\newcommand{\zero}{\hat{0}}

\newcommand{\des}{\mathrm{des}}

\newcommand{\un}{\hat{1}}
\newcommand{\At}{\mathrm{At}}

\newcommand{\Edge}{\mathcal{E}}

\newcommand{\medbullet}{\mathbin{\vcenter{\hbox{\scalebox{0.6}{$\bullet$}}}}}

\renewcommand{\L}{\mathcal{L}}

\newcommand{\Des}{\mathrm{Des}}

\renewcommand{\P}{\mathrm{P}}

\renewcommand{\hat}{\widehat}

\newcommand{\ind}{\mathrm{Ind}}

\title{An EL-shellable rank-uniform poset that is not UMEL-shellable}

\author{Hadi Al Aithan}
\address{H. Al Aithan, Massachusetts Institute of Technology, Cambridge, USA}
\email{hadi19@mit.edu}

\date{\today}

\begin{document}
\begin{abstract}
     We address Questions~8.1 and~8.4 of Coron--Ferroni--Li \cite{coron2025chow}. For the first part of Question~8.1, which asks whether every rank-uniform EL-shellable poset admits a rank-uniform EL-labeling, we construct a counterexample of rank $4$ and prove that this rank is minimal. For the second part of Question~8.1, concerning whether every rank-uniform EL-shellable poset is UMEL-shellable, we construct a counterexample of rank $3$. We also develop a general criterion for constructing UMEL-shellable posets of rank $3$. As an application, we give an affirmative answer to Question~8.4 of Coron--Ferroni--Li.
\end{abstract}

\maketitle
\tableofcontents

\section{Introduction}

    The study of polytopes and matroids and related polynomials is a rich and active field of combinatorics. For example, given a polytope or a simplicial complex, one can study its $h$-polynomial and $\gamma$-polynomial, see \cite{postnikov2008faces,  brenti2008f, athanasiadis2012flag, aisbett2020geometric, nguyen2024identity, nguyen2025p, nguyen2027poset}. Similarly, given a matroid, one can study its Chow polynomial, see \cite{feichtner2004chow, hameister2018chow, braden2022semi, backman2023simplicial, ferroni2024hilbert, liao2024equivariant, branden2025chow, stump2025chow, hoster2025chow}.

    Among the adjectives for polynomials with nonnegative coefficients, real-rooted is the strongest as it implies log-concavity and unimodality. It is also the hardest, and the following conjecture remains open not for lack of effort.

    \begin{conjecture}[\cite{stevens2021real, ferroni2024valuative}]
        The Chow polynomial of a matroid has only nonpositive real roots.
    \end{conjecture}

    In \cite{coron2025chow}, Coron--Ferroni--Li introduced UMEL-shellable posets and proved that their Chow polynomials and augmented Chow polynomials are real-rooted. Loosely speaking, UMEL-shellable posets are \textit{rank-uniform} EL-shellable posets in which the EL labeling is \textit{monotonic}. Clearly, UMEL-shellable posets are EL-shellable. However, the converse is not clear. Thus, Coron--Ferroni--Li asked the following questions.

    \begin{question}[{\cite[Question 8.1]{coron2025chow}}]
        If $P$ is rank-uniform and EL-shellable, does $P$ always admit an EL-labeling \(\lambda\) such that \((P, \lambda)\) is a monotonically rank-uniform labeled poset?
    \end{question}
    
    \begin{question}[{\cite[Question 8.1]{coron2025chow}}]
        If $P$ is rank-uniform and EL-shellable, does $P$ always admit an EL-labeling \(\lambda\) such that \((P, \lambda)\) is a rank-uniform labeled poset?
    \end{question}
    
    \begin{question}[{\cite[Question 8.4]{coron2025chow}}]
        Let $\L$ be a geometric lattice of rank $3$. If $\L$ is rank-uniform, does there exist a total order $\vartriangleleft$ on $\At(\L)$ such that the quantity 
        \begin{equation*}
            \des_0(s) = |\{ t \gtrdot s \, |\, \exists s' \vartriangleleft s \textrm{ s.t. } s' \leqslant t \}|
        \end{equation*}
        defined for all atoms $s$, is weakly increasing with respect to $\vartriangleleft$?  
    \end{question}

    Questions~1.2 and~1.3 are immediate for posets of rank $2$. In Section~\ref{rank-4}, we provide counterexamples showing that the corresponding statements do not hold in general in higher rank. However, we give affirmative answers to both questions for rank-$3$ posets in Theorems~\ref{theorem-1} and~\ref{theorem-2}. These results, together with our atom-insertion construction, yield Theorem~\ref{theorem-3}, which gives an affirmative answer to Question~1.4 and, in fact, applies to a broader class of rank-$3$ posets than geometric lattices.

\subsection*{Acknowledgments} We thank Son Nguyen for his mentorship throughout the development of this work. His guidance during the summer led to the study of this topic and helped identify the most fruitful directions to pursue. We are also grateful to Ali Alsaleh for his assistance in developing and running, using Claude Code, the computer program used to search for counterexamples to Question~8.1 of Coron--Ferroni--Li~\cite{coron2025chow}. Discussions with Ali also contributed to a deeper understanding of the underlying concepts and helped improve the results presented in this paper.

\section{Definitions}
\label{sec:definitions}
In this section, we recall the definitions from \cite{coron2025chow} that will be needed throughout this paper. For a more detailed discussion of rank-uniform labeled posets, we refer the reader to \cite[Section 3]{coron2025chow}.

\subsection{Posets and edge labelings}
Let $(\P, \leqslant)$ be a finite graded bounded poset with rank function $\rk \colon \P \to \mathbb{Z}_{\geqslant 0}$. We denote by $\un$ the upper bound of $\P$ and by $\zero$ its lower bound. We denote by $\At(\P)$ the set of elements of rank $1$, called the atoms of $\P$. For all $s\leqslant t \in \P$, the interval between $s$ and $t$ is denoted by 
\[
[s, t] \coloneqq \{ u \in \P \mid s \leqslant u \leqslant t\}. 
\]
If $s < t \in \P$ are such that no $u \in \P$ satisfies $s < u < t$, we say that $s$ is covered by $t$, or $t$ covers $s$, and we write $s \lessdot t$. We denote by $\Edge(\P)$ the set of pairs $(s, t) \in \P \times \P$ such that $s \lessdot t.$ An edge labeling of $\P$ is a map $\lambda: \Edge(\P) \rightarrow \Lambda$, with $\Lambda$ a totally ordered set with total order denoted $\vartriangleleft$. An edge labeling of $\P$ allows us to associate to every saturated chain $C$ of the form 
\[
s = u_0 \lessdot u_1 \lessdot \cdots \lessdot u_k = t 
\] between two elements $s \leqslant t$ the list of labels $\lambda(C) \coloneqq \lambda(u_0, u_1)\ldots \lambda(u_{k-1}, u_k).$

\begin{definition}
An edge labeling $\lambda: \Edge(\P) \rightarrow \Lambda$ of $\P$ is called \textit{edge-lexicographical} (EL), if for all $s \leqslant t$ in $\P$, there exists a unique maximal chain $C$
\[
s = u_0 \lessdot u_1 \lessdot \cdots \lessdot u_{k} = t
\]
between $s$ and $t$ with weakly increasing labels as the rank increases: 
\[
\lambda(u_0, u_1) \trianglelefteqslant \lambda(u_1, u_2) \trianglelefteqslant \cdots \trianglelefteqslant \lambda(u_{k-2}, u_{k-1}) \trianglelefteqslant \lambda(u_{k-1}, u_k),
\] and the maximal chain $C$ is lexicographically minimal among all maximal chains from $s$ to $t$. 
\end{definition}

An \emph{EL-labeled poset} $(\P,\lambda)$ is a pair consisting of a poset $\P$ and an EL-labeling 
\[
\lambda:\Edge(\P)\rightarrow \Lambda.
\]

\subsection{Rank-uniform labeled posets}
Let $\P$ be a bounded finite graded poset with rank function $\rk \colon \P \to \mathbb{Z}_{\geqslant 0}$. For every element $s$ and for every nonnegative integer $k$, we write $W_{k}(s)$ for the number of elements of rank $k + \rk s$ above $s$, called the \emph{$k$-th Whitney number above $s$}. 
\begin{definition}
The poset $\P$ is \textit{rank-uniform} if for all $s, t \in \P$ of the same rank and for every positive integer $k$, the number of elements of rank $k + \rk s$ above $s$ is equal to the number of elements of rank $k + \rk s$ above $t$: 
\begin{equation*}
    W_k(s) = W_{k}(t).
\end{equation*}
\end{definition}

Coron--Ferroni--Li introduced this terminology to define an edge-labeled version of the above notion for a poset equipped with an EL-labeling. 

\begin{definition}[Width, index, descent]Let $(\P, \lambda)$ be an EL-labeled poset and let $s \in \P$. Denote by $\lambda_1 \vartriangleleft \cdots \vartriangleleft \lambda_{\ell}$ the labels of the covering relations above $s$.
\begin{itemize}[leftmargin=23pt,itemsep=-1pt]
    \item The \emph{width above $s$} is the function 
    \[
    \omega_s \colon \mathbb{Z}_{>0} \to \Z_{\geqslant 0}, \quad \omega_s(i) \coloneqq \abs{ \: \{t \in \P \mid s \lessdot t\: , \: \lambda(s, t) = \lambda_i \: \} \: }. 
    \] 
    In other words, the width above $s$ at index $i$ records the number of covers of $s$ labeled by $\lambda_i$ (see Figure \ref{fig:width} below). By convention, for $i > \ell$, $\omega_{s}(i) = 0$.
     \begin{figure}[H]
         \centering
         \begin{tikzpicture}[scale=0.60, baseline=14px]
         \node[circle, draw=black, fill=red!15, inner sep=1.7pt] (A) at (0,0) {$s$};
         \node[circle, draw=black, fill=red!15, inner sep=1.5pt] (B) at (-6,2) {};
        \node[circle, draw=black, fill=red!15, inner sep=1.5pt] (C) at (-2,2) {};
         \node[circle, draw=black, fill=red!15, inner sep=1.5pt] (D) at (0,2) {};
         \node[circle, draw=black, fill=red!15, inner sep=1.5pt] (E) at (4,2) {};
        
         \draw[thick, BlueViolet] (A) -- (B) node[pos=0.5, left=6pt, scale = 0.8] {$\lambda_1$};
        \draw[thick, BlueViolet] (A) -- (C) node[midway,left, scale = 0.8] {$\lambda_1$};
         \draw[thick, BlueViolet] (A) -- (D) node[midway,left, scale = 0.8] {$\lambda_2$};
         \draw[thick, BlueViolet] (A) -- (E) node[pos=0.5, left=6pt, scale = 0.8] {$\lambda_2$};
         \node[thick, BlueViolet] (F) at (-3, 1.5) {$\cdots$};
         \node[] (G) at (1.5, 1.5) {$\cdots$};
         \node[] (H) at (5.5, 1.5) {$\cdots$};
          \node[] (I) at (5.5, 2.2) {$\cdots$}; 
          \node[] (J) at (5.5, 3.1) {$\cdots$};
         \draw [decorate, decoration={brace, amplitude=5pt, mirror=false}] ([yshift=5pt]B.north west) -- ([yshift=5pt]C.north east) node [midway, above=6pt] {$\omega_{s}(1)$};
         \draw [decorate, decoration={brace, amplitude=5pt, mirror=false}] ([yshift=5pt]D.north west) -- ([yshift=5pt]E.north east) node [midway, above=6pt] {$\omega_{s}(2)$};
     \end{tikzpicture}
         \caption{Widths above an element $s$.}
         \label{fig:width}
    \end{figure}
    
    \item Let $t$ cover $s$ in $\P$. We denote the \emph{index} of the label $\lambda(s, t)$ in $[\ell]$ by 
    \[
    \ind(s, t) \coloneqq \text{ the unique index $i \in [\ell]$ such that } \lambda_i =    \lambda(s, t). 
    \] 

    \item Let $\lambda_1' \vartriangleleft \cdots \vartriangleleft \lambda_{\ell'}'$ be the labels of the covering relations above $t$. We denote by $\Des_{s}(t)$ the set of indices $j$ in $[\ell']$ such that $\lambda'_j \vartriangleleft \lambda(s, t)$, that is, which gives rise to a descent, and we write $\des_{s}(t)$ for the cardinality of that set. 
    \begin{equation*}
        \Des_{s}(t) \coloneqq \{\: j \in [\ell']\mid \lambda_j' \vartriangleleft \lambda(s, t)\}, \quad \text{ and } \quad \des_{s}(t) \coloneqq \abs{\:\Des(s, t)\:}.  
    \end{equation*}
\end{itemize}
\end{definition}

The central object of study in this article is the following class of posets.

\begin{definition}[Rank-uniform labeled poset]
Let $(\P, \lambda)$ be an EL-labeled poset. We say that $(\P, \lambda)$ is \textit{rank-uniform} if the following holds.
\begin{itemize}
    \item (Uniform width) For all $s, s' \in \P$ of same rank and for all $i \geqslant 1$, the widths are equal: $\omega_s(i) = \omega_{s'}(i).$
    \item (Uniform descent) For all $s, s' \in \P$ of same rank and for every covering relations $s \lessdot t$ and $s' \lessdot t'$ such that $\ind(s,t) = \ind(s',t')$, the descent counts are equal: $\des_{s}(t) = \des_{s'}(t').$
\end{itemize}
\end{definition}

\begin{definition}[Monotonic labeled poset]
Let $(\P, \lambda)$ be a rank-uniform labeled poset, and for all $0 \leqslant k \leqslant \rk(\P)-1$ denote by $\ell_k$ the number of distinct labels above any element of rank $k$. The labeled poset $(\P, \lambda)$ is \textit{monotonic} if for all $0 \leqslant k \leqslant \rk(\P)-1$ the function $\des_k(\medbullet)$ is weakly increasing on $[\ell_k]$. A poset $\P$ admitting an EL-labeling $\lambda$ such that $(\P, \lambda)$ is a monotonic rank-uniform labeled poset is called \textit{uniformly monotonically edge-lexicographically shellable}, or \textit{UMEL-shellable} for short.
\end{definition}
In other words, a rank-uniform labeled poset $(\P, \lambda)$ is monotonic if at any element $s$ in $\P$, the larger the label above $s$, the more descents we observe from this label.

\section{Preliminary Lemmas}
In this section, we present several auxiliary lemmas that will be used throughout the subsequent sections. 

\begin{lemma}\label{lem:1}
Let $(P,\lambda)$ be an EL-labeled poset, let $s\in P$, and let $T$ be the set of elements covering $s$. If $t_1,t_2\in T$ and there exists an element $u\in P$ such that $t_1$ and $t_2$ are the only elements of $T$ covered by $u$, then \[ \lambda(s,t_1) \neq \lambda(s,t_2). \]
\end{lemma}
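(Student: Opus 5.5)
Suppose, for contradiction, that $\lambda(s,t_1)=\lambda(s,t_2)=a$. I will work entirely in the interval $[s,u]$. First I check that this interval has rank $2$ and that its middle elements are exactly $t_1$ and $t_2$. Since $t_1\lessdot u$ and $s\lessdot t_1$, the poset is graded and $\rk u=\rk s+2$. Any element $v$ with $s<v<u$ has rank $\rk s+1$, so $v$ covers $s$ and hence $v\in T$. Moreover $v<u$ with rank one less than $u$ means $u$ covers $v$. By hypothesis the only elements of $T$ covered by $u$ are $t_1$ and $t_2$, so $v\in\{t_1,t_2\}$. Therefore the maximal chains of $[s,u]$ are exactly $C_1: s\lessdot t_1\lessdot u$ and $C_2: s\lessdot t_2\lessdot u$.

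Next I apply the EL property to the pair $s\leqslant u$. There is a unique maximal chain with weakly increasing labels, and it is lexicographically minimal. Write $b_i=\lambda(t_i,u)$, so $\lambda(C_1)=a\,b_1$ and $\lambda(C_2)=a\,b_2$. I split into cases according to how $b_1$ and $b_2$ compare with $a$.

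\begin{itemize}
\item If $b_1,b_2\trianglerighteqslant a$, then both chains are weakly increasing, which contradicts uniqueness.
\item If $b_1,b_2\vartriangleleft a$, then neither chain is weakly increasing, which contradicts existence.
\item Otherwise, without loss of generality $b_1\trianglerighteqslant a\vartriangleright b_2$. Then $C_1$ is the unique weakly increasing chain. However, $\lambda(C_2)=a\,b_2$ is lexicographically strictly smaller than $a\,b_1$, because the first labels agree and $b_2\vartriangleleft a\trianglelefteqslant b_1$. This contradicts the requirement that the increasing chain be lexicographically minimal.
\end{itemize}

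Every case is contradictory, so $\lambda(s,t_1)\neq\lambda(s,t_2)$.

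The step that needs the most care is the first one: it uses gradedness to show that $t_1$ and $t_2$ are the only elements strictly between $s$ and $u$. After that, the argument is a direct case check against the definition of an EL-labeling.
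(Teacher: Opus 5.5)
Your proof is correct and follows essentially the same route as the paper: you work in the rank-two interval $[s,u]$, whose only maximal chains pass through $t_1$ and $t_2$, and observe that equal first labels would make the non-increasing chain lexicographically smaller than the increasing one. Your version is somewhat more careful than the paper's, since you use gradedness to verify that $[s,u]$ has exactly these two maximal chains, and you spell out the cases that the paper compresses into ``exactly one chain is increasing and the other decreasing.''
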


\begin{proof}
    The interval $[s,u]$ has exactly two maximal chains,
    \[s\lessdot t_1\lessdot u\qquad\text{and}\qquad s\lessdot t_2\lessdot u.\]
    Since $(P,\lambda)$ is EL-labeled, exactly one of these chains is increasing, while the other is decreasing. Suppose, for contradiction, that
    \[\lambda(s,t_1)=\lambda(s,t_2).\]
    Then the two chains have the same label on their first edge. Consequently, their lexicographic order is determined by the labels on their second edges. Since one chain is increasing and the other is decreasing, the decreasing chain has a smaller second label and is therefore lexicographically minimal. This contradicts the EL-labeling property, which requires the unique increasing maximal chain to be lexicographically minimal.
\end{proof}

\begin{lemma}\label{lem:2}
Let $(P,\lambda)$ be an EL-labeled poset, and let $T$ be the set of elements covering $s$. Then there exists a unique $t\in T$ such that \[ \operatorname{des}_s(t)=0. \] \end{lemma}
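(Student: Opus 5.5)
The plan is to work entirely inside the interval $[s,\un]$ and use the two halves of the EL condition: the \emph{uniqueness} of the increasing maximal chain gives uniqueness of $t$, and its \emph{lexicographic minimality} gives existence. I assume $s\neq\un$, so that $T\neq\emptyset$. If $t=\un$, there are no labels above $t$, so $\des_s(t)=0$ holds vacuously. The key observation is the following. For $t\in T$, the condition $\des_s(t)=0$ says exactly that every label $\lambda(t,u)$ with $t\lessdot u$ satisfies $\lambda(s,t)\trianglelefteqslant\lambda(t,u)$.

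\textbf{Existence.} Let $s=u_0\lessdot u_1\lessdot\cdots\lessdot u_k=\un$ be the unique increasing maximal chain of $[s,\un]$. By the EL property it is lexicographically minimal, and I take $t=u_1$. Every $t'\in T$ extends to a maximal chain of $[s,\un]$, so $m\coloneqq\lambda(s,t)$ is the minimum of the labels above $s$. Now suppose, for contradiction, that some $u\gtrdot t$ has $\lambda(t,u)\vartriangleleft m$. Let $s\lessdot t'\lessdot u$ be the increasing chain of $[s,u]$. Then $\lambda(s,t')\trianglerighteqslant m$, and $\lambda(t',u)\trianglerighteqslant\lambda(s,t')\trianglerighteqslant m$.
\begin{itemize}
\item If $\lambda(s,t')\vartriangleright m$, the chain through $t$ is lexicographically smaller.
\item If $\lambda(s,t')=m$, compare second labels: $\lambda(t,u)\vartriangleleft m\trianglelefteqslant\lambda(t',u)$, so again the chain through $t$ is smaller.
\end{itemize}
The chain through $t$ is not increasing, so in either case the lexicographic minimality of the increasing chain in $[s,u]$ fails. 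Hence $\des_s(t)=0$. This argument in fact shows that every cover of $s$ carrying the minimal label has $\des_s=0$.

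\textbf{Uniqueness.} Suppose $t_1\neq t_2$ in $T$ both satisfy $\des_{s}(t_i)=0$. For each $i$, take the increasing maximal chain $C_i$ of $[t_i,\un]$; this chain is empty if $t_i=\un$. Its first label is a label above $t_i$, so it is $\trianglerighteqslant\lambda(s,t_i)$ because $\des_s(t_i)=0$. Therefore $s\lessdot t_i$ followed by $C_i$ is an increasing maximal chain of $[s,\un]$. These two chains are distinct since $t_1\neq t_2$, which contradicts the uniqueness of the increasing chain in $[s,\un]$.

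I expect no serious obstacle. The only delicate step is the lexicographic comparison in the existence part, specifically the case of equal first labels, which is handled as above. One could instead derive existence more quickly by noting that the first step of the increasing chain of $[s,\un]$ already lies below every alternative, but the direct argument given is self-contained.
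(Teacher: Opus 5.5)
Your proof is correct and takes the same approach as the paper. Uniqueness is identical: you prepend $s$ to the increasing chains of $[t_i,\un]$ to get two increasing maximal chains of $[s,\un]$. Your existence argument, taking a cover with minimal label and comparing against the increasing chain of $[s,u]$, including the case $\lambda(s,t')=m$ where $[s,u]$ has more than two maximal chains, is just a careful write-out of the step the paper only calls ``analogous to Lemma~\ref{lem:1}.''
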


\begin{proof} Let $t\in T$ be such that $\lambda(s,t)$ is minimal among $\{\lambda(s,x):x\in T\}$. By an argument analogous to that in Lemma \ref{lem:1}, we have \[ \operatorname{des}_s(t)=0. \] Now suppose, for contradiction, that there exists another element $u\in T$ such that \[ \operatorname{des}_s(u)=0. \] By the EL-labeling property, the interval $[t,\hat{1}]$ has a unique increasing maximal chain \[ t\lessdot t_1 \lessdot t_2\lessdot\cdots\lessdot\hat{1}. \] Since $\operatorname{des}_s(t)=0$, the chain \[ s\lessdot t\lessdot t_1\lessdot t_2\lessdot\cdots\lessdot\hat{1} \] is increasing. Similarly, the interval $[u,\hat{1}]$ has an increasing maximal chain \[ u\lessdot u_1\lessdot u_2\lessdot\cdots\lessdot\hat{1}, \] and since $\operatorname{des}_s(u)=0$, the chain \[ s\lessdot u\lessdot u_1\lessdot u_2\lessdot\cdots\lessdot\hat{1} \] is also increasing. Thus, $[s,\hat{1}]$ contains two distinct increasing maximal chains, contradicting the uniqueness of the increasing maximal chain in an EL-labeling.
\end{proof}

\begin{corollary}\label{cor:1}
Let $(P,\lambda)$ be a labeled poset. Then, for every $s\in P$, \[ \omega_s(1)=1. \]
\end{corollary}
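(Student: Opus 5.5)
The plan is to show that the smallest label $\lambda_1$ above $s$ occurs on exactly one cover of $s$. The corollary is stated for a ``labeled poset'', but as in Lemma~\ref{lem:2} I read it as an EL-labeled poset $(P,\lambda)$ and take $s \neq \un$, so that $s$ has at least one cover. Let $T$ be the set of covers of $s$ and $\lambda_1$ the minimal label on edges $(s,t)$ with $t\in T$. Since $\lambda_1$ is attained, $\omega_s(1)\geqslant 1$, and it remains to prove $\omega_s(1)\leqslant 1$.

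The key step is that every $t\in T$ with $\lambda(s,t)=\lambda_1$ satisfies $\des_s(t)=0$. First I will check that no cover $t\lessdot t'$ has $\lambda(t,t')\vartriangleleft\lambda_1$. Suppose it did, and look at the interval $[s,t']$. Its unique increasing maximal chain is lexicographically minimal, so its first label is the smallest first label available in $[s,t']$. Every first edge of $[s,t']$ is an edge $(s,x)$ with $x\in T$, so that first label is at least $\lambda_1$, and it is at most $\lambda(s,t)=\lambda_1$; hence it equals $\lambda_1$. Its second label is then at least $\lambda_1$, since the chain is increasing. Now compare with the chain $s\lessdot t\lessdot t'$, which has first label $\lambda_1$ and second label $\lambda(t,t')\vartriangleleft\lambda_1$. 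This chain is lexicographically strictly smaller than the increasing chain, a contradiction. Therefore no label above $t$ is smaller than $\lambda_1$, so $\Des_s(t)=\emptyset$ and $\des_s(t)=0$. This is the ``argument analogous to Lemma~\ref{lem:1}'' that the proof of Lemma~\ref{lem:2} refers to, carried out for every minimal-label cover rather than just one.

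Finally I invoke the uniqueness part of Lemma~\ref{lem:2}: at most one $t\in T$ has $\des_s(t)=0$. Every cover carrying the label $\lambda_1$ has this property, so at most one cover carries $\lambda_1$, which gives $\omega_s(1)=1$.

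The only delicate point is the lexicographic comparison in the key step. It relies on the fact that all first labels in $[s,t']$ are edge labels out of $s$, so none is smaller than $\lambda_1$; once that is in place, the rest is immediate.
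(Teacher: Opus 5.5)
Your proof is correct and takes the paper's route. The paper states the corollary without proof, as an immediate consequence of Lemma~\ref{lem:2}: a cover carrying the minimal label has $\des_s(t)=0$, and at most one cover can have $\des_s(t)=0$. You also make explicit three points the paper glosses over. First, \emph{every} minimal-label cover, not just one chosen one, has no descent above it. Second, ``labeled'' must be read as EL-labeled. Third, you need $s\neq\un$.
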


\section{Rank 4 counterexamples}\label{rank-4}

Let $Q$ be the bounded graded poset whose Hasse diagram is drawn in Figure \ref{fig:counter-example-1} below. It is straightforward to verify that $Q$ is rank-uniform. We will prove that $Q$ is EL-shellable but not UMEL-shellable.

\begin{figure}[H]
    \centering
    \begin{tikzpicture}[
        scale=0.75,
        baseline=14px
    ]

    \node[circle, draw=black, fill=red!15, inner sep=1.7pt]
        (one) at (0,5) {$\hat{1}$};

    \node[circle, draw=black, fill=red!15, inner sep=1.7pt]
        (z1) at (-4,3) {$z_1$};
    \node[circle, draw=black, fill=red!15, inner sep=1.7pt]
        (z2) at (-1.5,3) {$z_2$};
    \node[circle, draw=black, fill=red!15, inner sep=1.7pt]
        (z3) at (1.5,3) {$z_3$};
    \node[circle, draw=black, fill=red!15, inner sep=1.7pt]
        (z4) at (4,3) {$z_4$};

    \node[circle, draw=black, fill=red!15, inner sep=1.7pt]
        (y1) at (-4,1) {$y_1$};
    \node[circle, draw=black, fill=red!15, inner sep=1.7pt]
        (y2) at (-1.5,1) {$y_2$};
    \node[circle, draw=black, fill=red!15, inner sep=1.7pt]
        (y3) at (1.5,1) {$y_3$};
    \node[circle, draw=black, fill=red!15, inner sep=1.7pt]
        (y4) at (4,1) {$y_4$};

    \node[circle, draw=black, fill=red!15, inner sep=1.7pt]
        (x1) at (-1.5,-1) {$x_1$};
    \node[circle, draw=black, fill=red!15, inner sep=1.7pt]
        (x2) at (1.5,-1) {$x_2$};

    \node[circle, draw=black, fill=red!15, inner sep=1.7pt]
        (zero) at (0,-3) {$\hat{0}$};

    \draw[blue!70!black] (one) -- (z1);
    \draw[blue!70!black] (one) -- (z2);
    \draw[blue!70!black] (one) -- (z3);
    \draw[blue!70!black] (one) -- (z4);

    \draw[blue!70!black] (z1) -- (y1);

    \draw[blue!70!black] (z2) -- (y1);
    \draw[blue!70!black] (z2) -- (y2);
    \draw[blue!70!black] (z2) -- (y3);

    \draw[blue!70!black] (z3) -- (y2);
    \draw[blue!70!black] (z3) -- (y3);
    \draw[blue!70!black] (z3) -- (y4);

    \draw[blue!70!black] (z4) -- (y4);

    \draw[blue!70!black] (y1) -- (x1)
        node[midway, left=3pt] {$c$};

    \draw[blue!70!black] (y2) -- (x1)
        node[midway, left=4pt] {$d$};

    \draw[blue!70!black] (y3) -- (x1)
        node[midway, left=6pt] {$e$};

    \draw[blue!70!black] (y2) -- (x2)
        node[midway, right=6pt] {$f$};

    \draw[blue!70!black] (y3) -- (x2)
        node[midway, right=4pt] {$g$};

    \draw[blue!70!black] (y4) -- (x2)
        node[midway, right=4pt] {$h$};

    \draw[blue!70!black] (x1) -- (zero)
        node[midway, left=4pt] {$a$};

    \draw[blue!70!black] (x2) -- (zero)
        node[midway, right=4pt] {$b$};

    \draw[blue!70!black] (y3) -- (z2)
        node[midway, right=6pt] {$i$};

    \draw[blue!70!black] (y3) -- (z3)
        node[midway, right=4pt] {$j$};
        
    \end{tikzpicture}
    \caption{Hasse diagram of $Q$}
    \label{fig:counter-example-1}
\end{figure}

\begin{claim}
    \(Q\) is not UMEL-shellable.
\end{claim}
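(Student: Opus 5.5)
The plan is to assume, for contradiction, that $Q$ carries an EL-labeling $\lambda$ making $(Q,\lambda)$ a monotonic rank-uniform labeled poset. I would then extract forced constraints level by level, using \Cref{lem:1}, \Cref{lem:2} and \Cref{cor:1}, until the EL condition fails on some interval. For reference, the covers read off from Figure~\ref{fig:counter-example-1} are
\[
x_1\lessdot y_1,y_2,y_3,\qquad x_2\lessdot y_2,y_3,y_4,
\]
\[
y_1\lessdot z_1,z_2,\qquad y_2,y_3\lessdot z_2,z_3,\qquad y_4\lessdot z_3,z_4,
\]
and every $z_i\lessdot\un$.

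\emph{Rank~2.} Each $y_i$ has exactly two covers. By \Cref{cor:1} these carry distinct labels, so the width at rank~$2$ is $(1,1)$. Each $z_j$ has a single upper label. Hence, by \Cref{lem:2}, the cover of $y_i$ with the smaller label has descent $0$ and the other has descent $1$. This level is automatically monotone, but it records, for each $y_i$, which of its two covers is its ``increasing direction''.

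\emph{Ranks~1 and~0.} The only interval $[\zero,u]$ containing exactly the two atoms is $[\zero,y_2]$ (or $[\zero,y_3]$). So \Cref{lem:1} gives $\lambda(\zero,x_1)\neq\lambda(\zero,x_2)$, and the rank-$0$ width is $(1,1)$. At rank~$1$ I would use \Cref{lem:1} on the intervals $[x_1,z_3]$ and $[x_2,z_2]$, whose atoms are exactly $\{y_2,y_3\}$. This gives $\lambda(x_i,y_2)\neq\lambda(x_i,y_3)$ for $i=1,2$. Together with \Cref{cor:1}, the rank-$1$ width is either $(1,2)$ or $(1,1,1)$, and the same pattern must hold at $x_1$ and at $x_2$.

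The descents $\des_{x_i}(y)\in\{0,1,2\}$ must then be uniform by index and weakly increasing. By \Cref{lem:2}, index~$1$ has descent~$0$ and no other cover has descent~$0$, so every non-minimal cover has descent $1$ or $2$. I would split into the two width cases. In each case I would split further according to which of $y_1,y_2,y_3$ (respectively $y_2,y_3,y_4$) receives the minimal label at $x_1$ (respectively $x_2$). The symmetry $x_1\leftrightarrow x_2$, $y_1\leftrightarrow y_4$, $z_1\leftrightarrow z_4$ roughly halves the number of cases.

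The contradiction should come from imposing the EL condition simultaneously on the intervals $[x_1,z_2]$, $[x_1,z_3]$, $[x_2,z_2]$, $[x_2,z_3]$ (each needs a unique increasing, lex-minimal chain) and on $[\zero,z_2]$, $[\zero,z_3]$, $[\zero,\un]$. The point is that $y_2$ and $y_3$ share both lower covers and both upper covers. The descent-$1$ versus descent-$2$ requirements coming from monotonicity force the two-element orders $\{\lambda(y_2,z_2),\lambda(y_2,z_3)\}$ and $\{\lambda(y_3,z_2),\lambda(y_3,z_3)\}$ to relate to the labels from $x_1$ and from $x_2$ in incompatible ways. Concretely, the configuration should produce either two increasing chains or none in one of $[x_i,z_j]$, or force the minimal element of $[\zero,z_j]$ to be reached by a non-increasing chain.

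The main obstacle is this final case analysis. The risk is that it branches considerably, both over the relative order of labels coming from different vertices (labels are global, not just per-vertex indices) and over the choice of increasing cover at each $y_i$. My first attempt would be to fix the increasing direction of $y_2$ and $y_3$ (there are four combinations) and show that each one already violates monotonicity at $x_1$ or at $x_2$. Only if that fails would I bring in the rank-$0$ intervals.
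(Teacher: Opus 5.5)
Your preliminary observations are correct: the cover relations, the width $(1,1)$ at rank~$2$, $\lambda(\zero,x_1)\neq\lambda(\zero,x_2)$, and the two possible width patterns at the atoms. However, the proposal never pins down a contradiction. The mechanism you expect, an incompatibility created by $y_2,y_3$ sharing their lower and upper covers, does not exist.

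In fact the part of $Q$ above rank~$1$ can be labeled consistently. Take:
\begin{itemize}
\item $\lambda(x_1,y_1)=1$, $\lambda(x_1,y_2)=3$, $\lambda(x_1,y_3)=4$;
\item $\lambda(x_2,y_4)=1$, $\lambda(x_2,y_3)=2$, $\lambda(x_2,y_2)=4$;
\item $\lambda(y_1,z_1)=1$, $\lambda(y_1,z_2)=3$;
\item $\lambda(y_2,z_2)=2$, $\lambda(y_2,z_3)=3$;
\item $\lambda(y_3,z_2)=3$, $\lambda(y_3,z_3)=1$;
\item $\lambda(y_4,z_3)=1$, $\lambda(y_4,z_4)=2$;
\item $\lambda(z_2,\un)=2$ and $\lambda(z_k,\un)=1$ for $k\neq 2$.
\end{itemize}
With this labeling:
\begin{itemize}
\item every interval $[s,t]$ with $s\neq\zero$ has a unique increasing chain, and it is lexicographically minimal;
\item the widths are $(1,1,1)$ at both atoms and $(1,1)$ at every $y_i$;
\item the descent counts by index are $(0,1,2)$ at both $x_1$ and $x_2$, and $(0,1)$ at every $y_i$.
\end{itemize}
So your first attempt must fail. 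With $y_2$ increasing toward $z_2$ and $y_3$ toward $z_3$, there is no monotonicity violation at $x_1$ or $x_2$.

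Your fallback fails as well. Add $\lambda(\zero,x_1)=1$ and $\lambda(\zero,x_2)=5$. Then the intervals $[\zero,z_2]$, $[\zero,z_3]$, $[\zero,\un]$ that you list all satisfy the EL condition, as do $[\zero,y_1]$, $[\zero,y_2]$, $[\zero,y_3]$, $[\zero,z_1]$. Rank~$0$ is also monotone, with descents $0$ and $3$. The only intervals that break are $[\zero,y_4]$ and $[\zero,z_4]$, whose single chains read $5,1$ and $5,1,2$.

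The missing idea is the asymmetry created by the \emph{private} covers ($y_1$ of $x_1$, $y_4$ of $x_2$) together with the bottom edges. Assume without loss of generality that $\lambda(\zero,x_1)\vartriangleleft\lambda(\zero,x_2)$.
\begin{itemize}
\item The two-chain intervals $[\zero,y_2]$ and $[\zero,y_3]$ force $\lambda(x_2,y_2),\lambda(x_2,y_3)\vartriangleleft\lambda(\zero,x_2)$.
\item The single-chain interval $[\zero,y_4]$ forces $\lambda(\zero,x_2)\trianglelefteqslant\lambda(x_2,y_4)$.
\item Hence $y_4$ carries the strictly largest label above $x_2$.
\item Since $[x_2,z_4]$ has only one chain, $\lambda(x_2,y_4)\trianglelefteqslant\lambda(y_4,z_4)$, so $\des_{x_2}(y_4)\leqslant 1$.
\end{itemize}
On the other hand, suppose say $\lambda(x_2,y_2)\vartriangleleft\lambda(x_2,y_3)$. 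Then the chains through $y_3$ are not lexicographically minimal in $[x_2,z_2]$ nor in $[x_2,z_3]$. So both labels above $y_3$, which are distinct, lie below $\lambda(x_2,y_3)$, giving $\des_{x_2}(y_3)=2$.

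This contradicts monotonicity at the single atom $x_2$. It needs no uniformity between $x_1$ and $x_2$ and no case split on width patterns. This is exactly the paper's proof.
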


\begin{proof}
    Suppose that $\lambda$ is a labeling of $Q$ such that $(Q,\lambda)$ is a monotonic rank-uniform labeled poset. We denote the labels in Figure \ref{fig:counter-example-1} by $a,b,c,d,e,f,g,h,i,j$, respectively, representing the labeling $\lambda$. By Lemma \ref{lem:1} , we have $a\neq b$. By symmetry, we may assume without loss of generality that $a\vartriangleleft b$. In the interval \([\hat{0}, y_{2}]\) there are two maximal chains: \(\hat{0}\lessdot x_1\lessdot y_{2}\text{ and }\hat{0}\lessdot x_2\lessdot y_{2}.\) Since $a\vartriangleleft b$, the unique increasing chain must be the first. So, $a\trianglelefteqslant d$ and $f\vartriangleleft b$. Similarly, $a\trianglelefteqslant e$ and $g\vartriangleleft b$. Moreover, the interval $[\hat{0},y_4]$ contains only one maximal chain, which must therefore be increasing. Hence, \(b\trianglelefteqslant h\), which implies that \(g\trianglelefteqslant h\).\\
    
    By Lemma \ref{lem:1}, we have $f\neq g$, and we may assume without loss of generality that $f\vartriangleleft g$. By considering the intervals \([x_{2},z_{2}]\) and \([x_{2},z_{3}]\), we must have $i\vartriangleleft g$ and $j\vartriangleleft g$. Moreover, $i\neq j$ (Lemma \ref{lem:1}). Therefore, \(Des_{x_{2}}(y_{3}) = \{i,j\}\) and \(des_{x_{2}}(y_{3}) = 2\). On the other hand, the interval \([x_{2},z_{4}]\) has only one maximal chain, which implies that \(h \trianglelefteqslant \lambda(y_{4},z_{4})\). Hence, \(des_{x_{2}}(y_{4}) \leq 1 < des_{x_{2}}(y_{3})\). However, \(g\trianglelefteqslant h\), which contradicts the monotonicity condition. Hence \(Q\) is not UMEL-shellable.
\end{proof}

\begin{claim}
    \(Q\) is EL-shellable.
\end{claim}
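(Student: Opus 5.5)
The plan is to write down one explicit EL-labeling $\lambda$ of $Q$ with labels in $\mathbb{Z}$ or $\mathbb{Q}$, and then check the EL condition on every interval of $Q$. Since $Q$ has rank $4$ and only $12$ elements, this is a finite check. Intervals of length $1$ are trivial, and intervals with a single maximal chain only require that chain to be weakly increasing. The real content therefore lies in the intervals of length $2$ with two or three maximal chains, and in the longer intervals $[\hat 0,z_k]$, $[x_i,\hat 1]$ and $[\hat 0,\hat 1]$.

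\textbf{Deriving the constraints.} I will choose the labels by working from the proof that $Q$ is not UMEL-shellable, in reverse. That proof shows which inequalities any EL-labeling must satisfy, so I will take them as design constraints.
\begin{itemize}
\item At the bottom: $a\vartriangleleft b$, $a\trianglelefteqslant c,d,e$, $f,g\vartriangleleft b\trianglelefteqslant h$, with $f\neq g$.
\item Above each $y_k$: the two labels into $\{z_1,\dots,z_4\}$ must differ, by Lemma~\ref{lem:1}.
\item Each length-$2$ interval with two maximal chains contributes exactly one rising chain, and that chain must be lexicographically first. In $Q$ these intervals are $[\hat 0,y_2]$, $[\hat 0,y_3]$, $[x_1,z_3]$, $[y_k,\hat 1]$ for every $k$, and the relevant pairs inside $[x_2,z_3]$. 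The length-$2$ intervals with three maximal chains are $[x_1,z_2]$ and $[x_2,z_3]$.
\end{itemize}
Each such interval gives one inequality of the form ``first label on chain $A$ $\vartriangleleft$ first label on chain $B$ implies chain $A$ rises and chain $B$ falls.'' This yields a finite system of strict and weak inequalities in the unknowns $a,\dots,j$, the remaining $y\lessdot z$ labels, and the four labels $s_k=\lambda(z_k,\hat 1)$. I would solve the system by hand, using rational labels to make room for strict inequalities.

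\textbf{Checking the longer intervals.} With a candidate $\lambda$ fixed, I would list all maximal chains of $[\hat 0,z_k]$, $[x_i,\hat 1]$ and $[\hat 0,\hat 1]$ and read off their label words. For each interval, I then check that exactly one word is weakly increasing and that it is the lexicographically smallest. This is routine bookkeeping once the labels are chosen.

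\textbf{Main obstacle.} The hard step is consistency at the top, around $z_2$ and $z_3$. Both $y_2$ and $y_3$ are covered by exactly $\{z_2,z_3\}$, and $y_4$ by $\{z_3,z_4\}$. So the relative order of $s_2$ and $s_3$ is forced, in competing directions, by the orders of $(\lambda(y_2,z_2),\lambda(y_2,z_3))$, $(i,j)$ and $(\lambda(y_4,z_3),\lambda(y_4,z_4))$. Those orders are themselves pinned down by the constraints coming from $[x_1,z_2]$, $[x_1,z_3]$ and $[x_2,z_3]$.

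A first naive attempt already shows the tension. Taking $a=1$, $b=2$, $(c,d,e)=(1,2,3)$, $f=0$, $g=1$, $h=2$ forces $i,j\vartriangleleft 1$ and leads to incompatible requirements on $s_3$. So I would first try a different ordering of the labels above $x_1$, for instance making $e$ small relative to $d$. The aim is to flip the orientation of $[x_1,z_3]$, which frees the orders of the labels above $y_2$ and $y_3$ so that $y_2$, $y_3$ and $y_4$ all agree on whether $s_2\vartriangleleft s_3$ or $s_3\vartriangleleft s_2$. If hand search stalls, I would fall back on a short exhaustive computer search over small integer labels, as the acknowledgments indicate the author did. I would then present the resulting $\lambda$ in a figure together with a table of chain words for each nontrivial interval.
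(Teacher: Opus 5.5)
\textbf{Gap: the labeling itself is never produced.} Your strategy is the paper's: write down an explicit labeling and check the finitely many intervals. But the proposal stops before the only substantive step. For an existence claim proved by construction, the labeling \emph{is} the proof. A search procedure that, by your own account, has hit an obstruction and may have to be handed to a computer establishes nothing about $Q$.

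Moreover, the obstruction you describe is not real. The intervals $[y_k,\hat{1}]$ never compare $s_2$ with $s_3$ directly.
In $[y_2,\hat{1}]$, the condition $\lambda(y_2,z_2)\vartriangleleft\lambda(y_2,z_3)$ only forces $\lambda(y_2,z_2)\trianglelefteqslant s_2$ and $s_3\vartriangleleft\lambda(y_2,z_3)$.
The pair $(i,j)$ similarly gives a lower bound on one of $s_2,s_3$ and an upper bound on the other.
The interval $[y_4,\hat{1}]$ concerns $s_3$ and $s_4$, not $s_2$.

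In fact your ``naive'' values $a=c=1$, $b=d=h=2$, $e=3$, $f=0$, $g=1$ do extend to an EL-labeling. Take $\lambda(y_1,z_1)=1$, $\lambda(y_1,z_2)=4$, $\lambda(y_2,z_2)=0$, $\lambda(y_2,z_3)=2$, $i=-1$, $j=0$, $\lambda(y_4,z_3)=-2$, $\lambda(y_4,z_4)=3$ and $(s_1,s_2,s_3,s_4)=(4,3,-1,2)$. Checking every interval, each has exactly one weakly increasing maximal chain, and it is lexicographically first. For instance, the increasing words are $(1,1,1,4)$ in $[\hat{0},\hat{1}]$, $(1,1,4)$ in $[x_1,\hat{1}]$, $(0,0,3)$ in $[x_2,\hat{1}]$ and $(1,2,2)$ in $[\hat{0},z_3]$.

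So the proposed reordering of $d$ and $e$ is unnecessary; the paper's labeling in Figure~\ref{fig:counter-example-EL} also has $d=3\vartriangleleft e=4$. Writing down such a labeling, together with the interval check, is what turns your plan into a proof. A smaller slip: your list of length-$2$ intervals with two maximal chains omits $[x_2,z_2]$, which is exactly the interval forcing $i\vartriangleleft g$.
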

\begin{proof}
    The Figure \ref{fig:counter-example-EL} below presents an EL-labeling of \(Q\) which can be verified by checking the intervals of \(Q\).
\end{proof}
    \begin{figure}[H]
        \centering
        \resizebox{0.5\textwidth}{!}{
        \begin{tikzpicture}[
            every node/.style={
                circle,
                draw=black,
                fill=red!15,
                inner sep=2pt,
                minimum size=7mm
            },
            edge/.style={
                draw=blue!70!black,
                thick
            },
            edge label/.style={
                draw=none,
                fill=white,
                inner sep=1.5pt,
                font=\small,
                text=blue!70!black
            }
        ]
    
    
        \node (one) at (0,5) {$\hat{1}$};
    
        \node (z1) at (-4,3) {$z_1$};
        \node (z2) at (-1.5,3) {$z_2$};
        \node (z3) at (1.5,3) {$z_3$};
        \node (z4) at (4,3) {$z_4$};
    
        \node (y1) at (-4,1) {$y_1$};
        \node (y2) at (-1.5,1) {$y_2$};
        \node (y3) at (1.5,1) {$y_3$};
        \node (y4) at (4,1) {$y_4$};
    
        \node (x1) at (-1.5,-1) {$x_1$};
        \node (x2) at (1.5,-1) {$x_2$};
    
        \node (zero) at (0,-3) {$\hat{0}$};
    
    
        \draw[edge]
            (one) -- (z1)
            node[pos=.48, above left=2pt, edge label] {$4$};
    
        \draw[edge]
            (one) -- (z2)
            node[pos=.75, above left=2pt, edge label] {$2$};
    
        \draw[edge]
            (one) -- (z3)
            node[pos=.75, above right=2pt, edge label] {$1$};
    
        \draw[edge]
            (one) -- (z4)
            node[pos=.48, above right=2pt, edge label] {$3$};
    
    
        \draw[edge]
            (z1) -- (y1)
            node[pos=.50, left=4pt, edge label] {$3$};
    
        \draw[edge]
            (z2) -- (y1)
            node[pos=.55, above left=2pt, edge label] {$4$};
    
        \draw[edge]
            (z2) -- (y2)
            node[pos=.50, left=2pt, edge label] {$2$};
    
        \draw[edge]
            (z2) -- (y3)
            node[pos=.75, above right=1pt, edge label] {$1$};
    
        \draw[edge]
            (z3) -- (y2)
            node[pos=.75, above left=1pt, edge label] {$3$};
    
        \draw[edge]
            (z3) -- (y3)
            node[pos=.50, right=1pt, edge label] {$2$};
    
        \draw[edge]
            (z3) -- (y4)
            node[pos=.65, above right=3pt, edge label] {$1$};
    
        \draw[edge]
            (z4) -- (y4)
            node[pos=.50, right=4pt, edge label] {$4$};
    
    
        \draw[edge]
            (y1) -- (x1)
            node[pos=.45, below left=3pt, edge label] {$2$};
    
        \draw[edge]
            (y2) -- (x1)
            node[pos=.50, left=2pt, edge label] {$3$};
    
        \draw[edge]
            (y2) -- (x2)
            node[pos=.65, right=8pt, edge label] {$2$};
    
        \draw[edge]
            (y3) -- (x1)
            node[pos=.65, left=8pt, edge label] {$4$};
    
        \draw[edge]
            (y3) -- (x2)
            node[pos=.50, right=2pt, edge label] {$3$};
    
        \draw[edge]
            (y4) -- (x2)
            node[pos=.50, below right=3pt, edge label] {$4$};
    
    
        \draw[edge]
            (x1) -- (zero)
            node[pos=.48, left=4pt, edge label] {$1$};
    
        \draw[edge]
            (x2) -- (zero)
            node[pos=.48, right=4pt, edge label] {$4$};
    
        \end{tikzpicture}}
        \caption{EL-labeling of $Q$}
        \label{fig:counter-example-EL}
    \end{figure}
It is also not difficult to find a rank-uniform EL-labeling of $Q$. In Figure \ref{fig:counter-example-2} below, we present a poset $R$ that is EL-shellable but does not admit a rank-uniform labeling.

\begin{figure}[H]
    \centering
        \resizebox{0.55\textwidth}{!}{
    \begin{tikzpicture}[
        every node/.style={
            circle,
            draw=black,
            fill=red!15,
            inner sep=2pt,
            minimum size=7mm
        },
        edge/.style={
            draw=blue!70!black,
            thick
        }
    ]


    \node (one) at (0,5) {$\hat{1}$};

    \node (z1) at (-5,3) {$z_1$};
    \node (z2) at (-2.5,3) {$z_2$};
    \node (z3) at (0,3) {$z_3$};
    \node (z4) at (2.5,3) {$z_4$};
    \node (z5) at (5,3) {$z_5$};

    \node (y1) at (-3.7,1) {$y_1$};
    \node (y2) at (-1.3,1) {$y_2$};
    \node (y3) at (1.3,1) {$y_3$};
    \node (y4) at (3.7,1) {$y_4$};

    \node (x1) at (-2,-1) {$x_1$};
    \node (x2) at (0,-1) {$x_2$};
    \node (x3) at (2,-1) {$x_3$};

    \node (zero) at (0,-3) {$\hat{0}$};


    \draw[edge] (one) -- (z1);
    \draw[edge] (one) -- (z2);
    \draw[edge] (one) -- (z3);
    \draw[edge] (one) -- (z4);
    \draw[edge] (one) -- (z5);


    \draw[edge] (z1) -- (y1);
    \draw[edge] (z1) -- (y2);

    \draw[edge] (z2) -- (y1);
    \draw[edge] (z2) -- (y4);

    \draw[edge] (z3) -- (y1);
    \draw[edge] (z3) -- (y2);
    \draw[edge] (z3) -- (y3);
    \draw[edge] (z3) -- (y4);

    \draw[edge] (z4) -- (y2);
    \draw[edge] (z4) -- (y3);
    \draw[edge] (z4) -- (y4);

    \draw[edge] (z5) -- (y3);


    \draw[edge] (y1) -- (x1);
    \draw[edge] (y1) -- (x2);

    \draw[edge] (y2) -- (x1);
    \draw[edge] (y2) -- (x3);

    \draw[edge] (y3) -- (x1);
    \draw[edge] (y3) -- (x2);
    \draw[edge] (y3) -- (x3);

    \draw[edge] (y4) -- (x2);
    \draw[edge] (y4) -- (x3);


    \draw[edge] (x1) -- (zero);
    \draw[edge] (x2) -- (zero);
    \draw[edge] (x3) -- (zero);
    \end{tikzpicture}}
    \caption{Hasse diagram of $R$}
    \label{fig:counter-example-2}
\end{figure}
It can be verified by routine checks that $R$ is rank-uniform. To provide a clearer picture of $R$, we include the Hasse diagrams of the three intervals $[x_1,\hat{1}]$, $[x_2,\hat{1}]$, and $[x_3,\hat{1}]$ in the figures below.

\begin{figure}[H]
    \centering
        \resizebox{0.5\textwidth}{!}{
    \begin{tikzpicture}[
        every node/.style={
            circle,
            draw=black,
            fill=red!15,
            inner sep=2pt,
            minimum size=7mm
        },
        edge/.style={
            draw=blue!70!black,
            thick
        }
    ]


    \node (one) at (0,5) {$\hat{1}$};

    \node (z1) at (-5,3) {$z_1$};
    \node (z2) at (-2.5,3) {$z_2$};
    \node (z3) at (0,3) {$z_3$};
    \node (z4) at (2.5,3) {$z_4$};
    \node (z5) at (5,3) {$z_5$};

    \node (y1) at (-2.5,1) {$y_1$};
    \node (y2) at (0,1) {$y_2$};
    \node (y3) at (2.5,1) {$y_3$};

    \node (x1) at (0,-1) {$x_1$};


    \draw[edge] (one) -- (z1);
    \draw[edge] (one) -- (z2);
    \draw[edge] (one) -- (z3);
    \draw[edge] (one) -- (z4);
    \draw[edge] (one) -- (z5);


    \draw[edge] (z1) -- (y1);
    \draw[edge] (z1) -- (y2);

    \draw[edge] (z2) -- (y1);

    \draw[edge] (z3) -- (y1);
    \draw[edge] (z3) -- (y2);
    \draw[edge] (z3) -- (y3);

    \draw[edge] (z4) -- (y2);
    \draw[edge] (z4) -- (y3);

    \draw[edge] (z5) -- (y3);


    \draw[edge] (y1) -- (x1);
    \draw[edge] (y2) -- (x1);
    \draw[edge] (y3) -- (x1);
    \end{tikzpicture}}
    \caption{Hasse diagram of $[x_1,\hat{1}]$}
    \label{fig:x-1}
\end{figure}
\begin{figure}[H]
    \centering
        \resizebox{0.5\textwidth}{!}{
    \begin{tikzpicture}[
        every node/.style={
            circle,
            draw=black,
            fill=red!15,
            inner sep=2pt,
            minimum size=7mm
        },
        edge/.style={
            draw=blue!70!black,
            thick
        }
    ]


    \node (one) at (0,5) {$\hat{1}$};

    \node (z1) at (-5,3) {$z_1$};
    \node (z2) at (-2.5,3) {$z_2$};
    \node (z3) at (0,3) {$z_3$};
    \node (z4) at (2.5,3) {$z_4$};
    \node (z5) at (5,3) {$z_5$};

    \node (y1) at (-2.5,1) {$y_1$};
    \node (y3) at (0,1) {$y_3$};
    \node (y4) at (2.5,1) {$y_4$};

    \node (x2) at (0,-1) {$x_2$};


    \draw[edge] (one) -- (z1);
    \draw[edge] (one) -- (z2);
    \draw[edge] (one) -- (z3);
    \draw[edge] (one) -- (z4);
    \draw[edge] (one) -- (z5);


    \draw[edge] (z1) -- (y1);

    \draw[edge] (z2) -- (y1);
    \draw[edge] (z2) -- (y4);

    \draw[edge] (z3) -- (y1);
    \draw[edge] (z3) -- (y3);
    \draw[edge] (z3) -- (y4);

    \draw[edge] (z4) -- (y3);
    \draw[edge] (z4) -- (y4);

    \draw[edge] (z5) -- (y3);


    \draw[edge] (y1) -- (x2);
    \draw[edge] (y3) -- (x2);
    \draw[edge] (y4) -- (x2);

    \end{tikzpicture}}
    \caption{Hasse diagram of $[x_2,\hat{1}]$}
    \label{fig:x-2}
\end{figure}
\begin{figure}[H]
    \centering
        \resizebox{0.5\textwidth}{!}{
    \begin{tikzpicture}[
        every node/.style={
            circle,
            draw=black,
            fill=red!15,
            inner sep=2pt,
            minimum size=7mm
        },
        edge/.style={
            draw=blue!70!black,
            thick
        }
    ]


    \node (one) at (0,5) {$\hat{1}$};

    \node (z1) at (-5,3) {$z_1$};
    \node (z2) at (-2.5,3) {$z_2$};
    \node (z3) at (0,3) {$z_3$};
    \node (z4) at (2.5,3) {$z_4$};
    \node (z5) at (5,3) {$z_5$};

    \node (y2) at (-2.5,1) {$y_2$};
    \node (y3) at (0,1) {$y_3$};
    \node (y4) at (2.5,1) {$y_4$};

    \node (x3) at (0,-1) {$x_3$};


    \draw[edge] (one) -- (z1);
    \draw[edge] (one) -- (z2);
    \draw[edge] (one) -- (z3);
    \draw[edge] (one) -- (z4);
    \draw[edge] (one) -- (z5);


    \draw[edge] (z1) -- (y2);

    \draw[edge] (z2) -- (y4);

    \draw[edge] (z3) -- (y2);
    \draw[edge] (z3) -- (y3);
    \draw[edge] (z3) -- (y4);

    \draw[edge] (z4) -- (y2);
    \draw[edge] (z4) -- (y3);
    \draw[edge] (z4) -- (y4);

    \draw[edge] (z5) -- (y3);


    \draw[edge] (y2) -- (x3);
    \draw[edge] (y3) -- (x3);
    \draw[edge] (y4) -- (x3);

    \end{tikzpicture}}
    \caption{Hasse diagram of $[x_3,\hat{1}]$}
    \label{fig:x-3}
\end{figure}
Suppose that \(\lambda\) is an EL-labeling of \(R\), such that \((R,\lambda)\) is a rank-uniform labeled poset. We first focus on the interval $x_3,\hat{1}$ shown in the figure.
\begin{claim}
    For all $1\leq i\leq 4$,
\[
\omega_{y_i}(1)=\omega_{y_i}(2)=\omega_{y_i}(3)=1.
\]
i.e., all elements above $y_i$ are connected by a unique label, and hence all corresponding labels are distinct.
\end{claim}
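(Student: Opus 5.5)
By \cref{cor:1}, it suffices to rule out the only other possible width profile. In $R$ every $y_i$ has exactly three upper covers: $y_1\lessdot z_1,z_2,z_3$, $y_2\lessdot z_1,z_3,z_4$, $y_3\lessdot z_3,z_4,z_5$, and $y_4\lessdot z_2,z_3,z_4$. So $\omega_{y_i}(1)+\omega_{y_i}(2)+\cdots=3$. By \cref{cor:1}, $\omega_{y_i}(1)=1$, so the only alternative to the claimed profile is $\omega_{y_i}(1)=1,\ \omega_{y_i}(2)=2$. By the uniform width condition, this alternative would hold simultaneously for all four $y_i$. I therefore assume, for contradiction, that every $y_i$ has one cover $w_i$ with the smallest label $\mu_i$, and two covers sharing a label $\nu_i\vartriangleright\mu_i$.

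\textbf{Step 1: local consequences at each $y_i$.} Each $z_j$ has the single cover $\hat 1$; write $\rho_j=\lambda(z_j,\hat 1)$. The interval $[y_i,\hat 1]$ has three maximal chains. The lexicographically minimal one must begin with $\mu_i$, because that is the only label equal to the minimum. Hence it passes through $w_i$, and the EL property forces $\mu_i\trianglelefteqslant\rho_{w_i}$. The two chains through the $\nu_i$-covers must then both be non-increasing, so $\rho_z\vartriangleleft\nu_i$ for both of those covers $z$. This agrees with \cref{lem:2}: $\des_{y_i}(w_i)=0$ and the other two covers have descent $1$. Together with uniform descent, this already pins down which $\rho_j$ are small relative to which $\nu_i$.

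\textbf{Step 2: constraints from the rank-one elements.} Next I use the intervals $[x_k,\hat 1]$ displayed in the figures. Some intervals $[x_k,z_j]$ consist of a single chain and hence must be increasing: $[x_3,z_1]$ via $y_2$, $[x_3,z_2]$ via $y_4$, $[x_3,z_5]$ via $y_3$, $[x_1,z_2]$ via $y_1$, $[x_1,z_5]$ via $y_3$, and similar ones for $x_2$. These give inequalities $\lambda(x_k,y_i)\trianglelefteqslant\lambda(y_i,z_j)$. Intervals $[x_k,z_j]$ with exactly two chains, such as $[x_1,z_1]$ through $y_1,y_2$, give distinctness of the first labels by \cref{lem:1}, and they also determine which of the two chains is increasing. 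Finally, each $[x_k,\hat 1]$ must have exactly one increasing maximal chain, and it must be lexicographically minimal. Combining this with Step 1, a chain $x_k\lessdot y_i\lessdot z\lessdot\hat 1$ can only be increasing if $z=w_i$ and $\lambda(x_k,y_i)\trianglelefteqslant\mu_i$.

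\textbf{Step 3: the contradiction.} The plan is to determine which $z_j$ can serve as $w_i$ for each $i$. From the single-chain intervals, an edge $y_i\lessdot z_j$ whose chain from $x_k$ is forced to be increasing must carry a label at least $\lambda(x_k,y_i)$. Meanwhile, the $\nu_i$-edges are the large labels at $y_i$. The key counting point is that $z_3$ lies above all four $y_i$, and $z_4$ lies above $y_2,y_3,y_4$. In $[x_3,\hat 1]$ these force several increasing chains to $\hat 1$ unless the $w_i$ are placed very restrictively. I would do a case analysis on $w_2\in\{z_1,z_3,z_4\}$, using the symmetric roles of $x_3$'s covers. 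In each case I expect to exhibit either two increasing maximal chains in some $[x_k,\hat 1]$, or an increasing chain that is not lexicographically minimal. Either outcome contradicts the EL property.

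This case analysis is the main obstacle. The width assumption alone is consistent locally at each $y_i$, so the contradiction must come from the interaction between the three intervals $[x_k,\hat 1]$ and the uniform descent condition at rank $1$. I would first try to derive the contradiction entirely inside $[x_3,\hat 1]$, and only bring in $[x_1,\hat 1]$ and $[x_2,\hat 1]$ if needed. Once the alternative profile is excluded, $\omega_{y_i}(1)=\omega_{y_i}(2)=\omega_{y_i}(3)=1$ follows, so the three upper edges at each $y_i$ carry distinct labels.
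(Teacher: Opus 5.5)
Your reduction is correct. Each $y_i$ has exactly three upper covers and \cref{cor:1} gives $\omega_{y_i}(1)=1$, so the only competing profile is $(1,2)$, and uniform width would make it hold at every $y_i$.

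However, the proposal stops exactly where the proof has to happen. Step~3 is only a plan (``I expect to exhibit\ldots''), and no contradiction is actually derived. Steps~1 and~2 do not supply the ingredient that makes it work either. You use \cref{cor:1} only at the $y_i$, and you look for the contradiction in the intervals $[x_k,\hat 1]$ via the labels $\rho_j$ and uniform descent. None of this is needed, and your diagnosis that uniform descent must enter is wrong.

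The split you propose, on $w_2$, is also not the decisive one. If $\lambda(x_3,y_2)$ is the smallest label above $x_3$, then all three chains $x_3\lessdot y_2\lessdot z$ are increasing. They impose no order among the labels above $y_2$, so the profile $(1,2)$ at $y_2$ is locally consistent.

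The missing idea is to apply \cref{cor:1} at $x_3$, where the minimum label is attained at exactly one cover.
\begin{itemize}
\item The interval $[x_3,\hat 1]$ is symmetric under permuting $y_2,y_3,y_4$. Each is covered by $z_3$, $z_4$ and one private element among $z_1,z_5,z_2$. So you may assume the minimum label above $x_3$ sits on $y_2$.
\item Each of $[x_3,z_3]$ and $[x_3,z_4]$ has three maximal chains, and the chain through $y_2$ has the strictly smallest first label. It is therefore lexicographically minimal, hence the unique increasing chain.
\item Consequently the chains through $y_4$ are descents: $\lambda(y_4,z_3),\lambda(y_4,z_4)\vartriangleleft\lambda(x_3,y_4)$.
\item The interval $[x_3,z_2]$ consists of the single chain through $y_4$, so $\lambda(x_3,y_4)\trianglelefteqslant\lambda(y_4,z_2)$.
\item Hence $\lambda(y_4,z_2)$ is the unique largest label above $y_4$. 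This is incompatible with the profile $(1,2)$, in which the largest label occurs twice.
\end{itemize}
Equivalently, as in the paper, \cref{cor:1} at $y_4$ forces $\lambda(y_4,z_3)\neq\lambda(y_4,z_4)$. So the three labels above $y_4$ are distinct, and uniform width transfers this to every $y_i$. The argument uses only the rank-$2$ intervals above $x_3$. It needs neither $[y_i,\hat 1]$, nor the other atoms, nor the uniform descent condition.
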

\begin{proof}
Since we have assumed that $(R,\lambda)$ is a rank-uniform labeled poset, it is enough to prove the claim for one element in rank $2$, as the result for the remaining elements follows from the uniformity of the widths. By Corollary~\ref{cor:1}, we have $\omega_{x_3}(1)=1$. Without loss of generality, assume that the minimum label above $x_3$ is $\lambda(x_3,y_2)$. Then the unique increasing maximal chains in the intervals $[x_3,z_1]$, $[x_3,z_3]$, and $[x_3,z_4]$ are
\[
x_3\lessdot y_2\lessdot z_1,\qquad
x_3\lessdot y_2\lessdot z_3,\qquad
x_3\lessdot y_2\lessdot z_4,
\]
respectively. Moreover, the unique increasing maximal chains in the intervals $[x_3,z_2]$ and $[x_3,z_5]$ are forced to be
\[
x_3\lessdot y_4\lessdot z_2
\qquad\text{and}\qquad
x_3\lessdot y_3\lessdot z_5,
\]
respectively. Thus, by the EL property, we have
\[
\lambda(y_4,z_3),\lambda(y_4,z_2)
\trianglelefteqslant \lambda(x_3,y_4),
\]
and, since the chain
\[
x_3\lessdot y_4\lessdot z_2
\]
is increasing,
\[
\lambda(x_3,y_4)\vartriangleleft\lambda(y_4,z_2).
\]
By Corollary~\ref{cor:1}, we also have
\[
\lambda(y_4,z_3)\neq\lambda(y_4,z_2).
\]
It follows that
\[
\omega_{y_4}(1)=\omega_{y_4}(2)=\omega_{y_4}(3)=1.
\]
By the uniformity of the widths, the same holds for all $y_i$ with
$1\leq i\leq 4$.
\end{proof}

\begin{claim}
    For all $1\leq i\leq 3$ and $1\leq j\leq 4$, there exists some
$1\leq k\leq 5$ such that
\[
x_i\lessdot y_j\lessdot z_k.
\qquad\text{and}\qquad
\lambda(x_{i},y_{j})\trianglelefteqslant\lambda(y_{j},z_{k}) 
\]
\end{claim}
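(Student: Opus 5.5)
The plan is to use the uniform descent condition. It transfers a bound on descents computed at $x_3$ to every element of rank $1$. We then combine this with the previous claim, which says that the three covers of each $y_j$ carry three distinct labels.

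First, at $x_3$, with the normalisation from the previous proof (the minimum label above $x_3$ is $\lambda(x_3,y_2)$), I will show that $\des_{x_3}(y)\leqslant 2$ for each cover $y\in\{y_2,y_3,y_4\}$ of $x_3$.
\begin{itemize}
\item For $y_2$: the chains $x_3\lessdot y_2\lessdot z_k$ for $k=1,3,4$ are increasing, so $\des_{x_3}(y_2)=0$. This is also consistent with Lemma~\ref{lem:2}.
\item For $y_4$: the chain $x_3\lessdot y_4\lessdot z_2$ is forced to be increasing, so $\lambda(x_3,y_4)\trianglelefteqslant\lambda(y_4,z_2)$. In fact the inequality is strict, as shown in the previous proof. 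Hence the label $\lambda(y_4,z_2)$ is not a descent, and $\des_{x_3}(y_4)\leqslant 2$.
\item For $y_3$: the interval $[x_3,z_5]$ has only the chain $x_3\lessdot y_3\lessdot z_5$. This chain must be increasing, so $\lambda(x_3,y_3)\trianglelefteqslant\lambda(y_3,z_5)$, and therefore $\des_{x_3}(y_3)\leqslant 2$.
\end{itemize}
Thus every descent count at $x_3$ is at most $2$.

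Next, I transfer this bound to $x_1$ and $x_2$ using rank-uniformity. By uniform width, the label indices occurring above $x_i$ are exactly those occurring above $x_3$. So for any cover $x_i\lessdot y_j$ there is a cover $x_3\lessdot y$ with $\ind(x_i,y_j)=\ind(x_3,y)$. Uniform descent then gives $\des_{x_i}(y_j)=\des_{x_3}(y)\leqslant 2$.

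Finally, by the previous claim $y_j$ has exactly three covers $z_k$, and their labels are pairwise distinct. So $\des_{x_i}(y_j)$ is just the number of these three covers $z_k$ with $\lambda(y_j,z_k)\vartriangleleft\lambda(x_i,y_j)$. Since this number is at most $2$, at least one cover $z_k$ satisfies $\lambda(x_i,y_j)\trianglelefteqslant\lambda(y_j,z_k)$, which is the claim.

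The step needing the most care is the transfer between $x_3$ and the other $x_i$. Uniform descent only compares covers with the same index, so one must check that every index realised above $x_i$ is also realised above $x_3$; this is exactly what uniform width guarantees. The only other point is that $\des$ counts distinct labels rather than covers, which is why the previous claim that the labels above each $y_j$ are distinct is needed.
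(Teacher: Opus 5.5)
Your proof is correct and follows the paper's argument. You bound the descent counts at $x_3$ (the paper asserts they are exactly a permutation of $(0,2,2)$, but you only need $\leqslant 2$), transfer the bound to $x_1,x_2$ via uniform width and uniform descent (you spell out the index matching, which the paper leaves implicit), and conclude from the distinctness of the three labels above each $y_j$. Your aside that $\lambda(x_3,y_4)\vartriangleleft\lambda(y_4,z_2)$ is strict is unnecessary: an increasing chain only gives $\trianglelefteqslant$, and that weak inequality is all your argument uses, so the aside can be dropped.
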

\begin{proof}
    In the previous claim, we showed that
    \[
    \bigl(\operatorname{des}_{x_3}(y_2),
    \operatorname{des}_{x_3}(y_3),
    \operatorname{des}_{x_3}(y_4)\bigr)
    \]
    is a permutation of $(0,2,2)$. Moreover, we proved that the labels above
    each element of rank $2$ are distinct. Therefore, there is an ascent
    above each of the edges $x_3\lessdot y_j$, for $j=2,3,4$, which proves
    the claim for $i=3$. The result for $i=1,2$ follows from the uniformity
    of the descents.
\end{proof}

\begin{claim}
    \((R,\lambda)\) cannot be a rank-uniform labeled poset.
\end{claim}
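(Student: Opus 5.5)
The plan is to turn the two preceding claims into purely combinatorial data about which chains $x_i\lessdot y_j\lessdot z_k$ are ascents. Then I will check this data against the requirement that every interval $[x_i,z_k]$ contains exactly one increasing maximal chain.

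\textbf{Step 1: a combinatorial model.} By the first claim, the three covers of each $y_j$ carry three distinct labels. Write $\mathrm{top}(y_j)$ for the cover of $y_j$ with the largest label. By the second claim together with the uniformity of descents, above every $x_i$ the descent vector is $(0,2,2)$ in index order. The index-$1$ cover (the minimal label, as in Lemma~\ref{lem:2}) has $\operatorname{des}=0$, so it ascends to all three covers of that $y_j$. Each of the other two covers has $\operatorname{des}=2$, so it ascends only to $\mathrm{top}(y_j)$. Since each $[x_i,z_k]$ is a length-two interval, it must contain exactly one ascending chain $x_i\lessdot y_j\lessdot z_k$. This gives an exact-cover condition above each $x_i$.

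\textbf{Step 2: forced tops and case analysis.} I will read off the forced values from the intervals with a single chain.
\begin{itemize}
\item From $x_3$ (with its minimal cover $y_2$, as normalised earlier), $[x_3,z_2]$ passes only through $y_4$ and $[x_3,z_5]$ only through $y_3$. Hence $\mathrm{top}(y_4)=z_2$ and $\mathrm{top}(y_3)=z_5$.
\item At $x_2$, the interval $[x_2,z_1]$ passes only through $y_1$.
\item At $x_1$, the interval $[x_1,z_2]$ passes only through $y_1$.
\end{itemize}
Now split on whether $y_1$ is the minimal cover of $x_2$.

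If it is, then $y_1$ ascends to $z_2$ from $x_2$. But $y_4$ is then a $\operatorname{des}=2$ cover whose top is $z_2$, so it also ascends to $z_2$. This gives two increasing chains in $[x_2,z_2]$, a contradiction.

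Otherwise $\mathrm{top}(y_1)=z_1$. Then $[x_1,z_2]$ forces $y_1$ to be the minimal cover of $x_1$. The exact-cover condition at $x_1$ then forces $\mathrm{top}(y_2)=z_4$, which is consistent with the data coming from $x_3$. The minimal cover of $x_2$ must then be $y_3$ or $y_4$, and both options survive the counting.

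\textbf{Step 3: the remaining cases (expected main obstacle).} The counting model alone does not seem to close the argument, so here I would use actual label inequalities. The quantities to compare are:
\begin{itemize}
\item $\lambda(x_i,y_j)$ against the three labels above $y_j$, including the precise position of $\lambda(x_i,y_j)$ in that order, which is what $\operatorname{des}$ records;
\item the lexicographic-minimality requirement in intervals such as $[x_2,z_3]$, $[x_2,z_4]$ and $[x_1,z_3]$, where several chains compete;
\item the rank-$1$ constraints from the two-chain intervals $[\hat0,y_j]$, together with uniform width at $\hat0$, which by Corollary~\ref{cor:1} has $\omega_{\hat0}(1)=1$.
\end{itemize}

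Concretely, $y_1$ is minimal above $x_1$ but has $\operatorname{des}=2$ above $x_2$. So $\lambda(x_2,y_1)$ lies strictly between the second and third labels above $y_1$, whereas $\lambda(x_1,y_1)$ lies below all of them. Similarly, $\lambda(x_1,y_3)$ and $\lambda(x_3,y_3)$ both sit just under $\mathrm{top}(y_3)$.

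I would chase these inequalities through the shared elements $y_1,y_2,y_3$ and the intervals above $\hat0$. The goal is a cyclic chain of strict inequalities, or two increasing chains in some $[\hat0,z_k]$, in each of the two subcases ($y_3$ or $y_4$ minimal over $x_2$). If that fails, the fallback is a finite exhaustive check of the relative orders allowed by Steps 1–2, in the spirit of the computer search mentioned in the acknowledgments.
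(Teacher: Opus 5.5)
Your proposal does not yet prove the claim. Steps~1 and~2 work only inside the upper intervals $[x_i,\hat{1}]$. As you observe yourself, the exact-cover conditions there can be satisfied (both subcases, with $y_3$ or $y_4$ minimal over $x_2$, survive), so no contradiction can come from that level alone. Step~3 only lists quantities to compare and a fallback to exhaustive search; it never produces the contradiction.

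Step~2 also has a problem of its own. Normalising $y_2$ to be the minimal cover of $x_3$ was harmless inside the proof of the first claim, where only the interval $[x_3,\hat{1}]$ mattered, and that interval is symmetric in $y_2,y_3,y_4$. But the only nontrivial automorphism of $R$ is $(x_1\,x_2)(y_2\,y_4)(z_1\,z_2)$, which fixes $y_3$. So your global case analysis silently omits the case where $y_3$ is the minimal cover of $x_3$.

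The missing idea is to use the bottom edges and lexicographic minimality in the intervals $[\hat{0},z_k]$. This removes the need for any case analysis, and your Step~1 model already contains the one fact required: every edge $x_i\lessdot y_j$ is followed by an ascent (at least to $\mathrm{top}(y_j)$), which is exactly the second claim. The argument runs as follows.
\begin{enumerate}
\item Let $x_{i_1}$ be the atom carrying the strictly smallest label above $\hat{0}$, which is unique by Corollary~\ref{cor:1}.
\item Pick $y_j$ not above $x_{i_1}$: take $y_4$, $y_2$ or $y_1$ according as $i_1=1,2,3$.
\item The increasing chain of $[\hat{0},y_j]$ passes through some $x_{i_2}\neq x_{i_1}$. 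Appending an ascent $y_j\lessdot z_k$ gives an increasing maximal chain $\hat{0}\lessdot x_{i_2}\lessdot y_j\lessdot z_k$.
\item Every $z_k$ lies above all three atoms. Hence $[\hat{0},z_k]$ also contains a maximal chain beginning with $\hat{0}\lessdot x_{i_1}$, whose first label is strictly smaller.
\item So the increasing chain in $[\hat{0},z_k]$ is not lexicographically minimal, contradicting the EL property.
\end{enumerate}
This is the paper's argument. It needs neither the normalisation at $x_3$ nor any of the exact-cover bookkeeping.
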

\begin{proof}
    By Lemma~\ref{lem:1}, we know that
$\lambda(\hat{0},x_1)$, $\lambda(\hat{0},x_2)$, and
$\lambda(\hat{0},x_3)$ are distinct. Let
$\lambda(\hat{0},x_{i_1})$ be the minimum of these three labels.

There exists an element $y_j$ that does not cover $x_{i_1}$ but covers
one of the other two elements, say $x_{i_2}$, where $i_2\neq i_1$.
Since $y_j$ does not cover $x_{i_1}$, the unique increasing maximal
chain in the interval $[\hat{0},y_j]$ must pass through one of the other
two elements. Without loss of generality, suppose it passes through
$x_{i_2}$. Thus,
\[
\lambda(\hat{0},x_{i_2})
\trianglelefteqslant
\lambda(x_{i_2},y_j).
\]

By the previous claim, there exists $k$ such that
\[
x_{i_2}\lessdot y_j\lessdot z_k
\qquad\text{and}\qquad
\lambda(x_{i_2},y_j)
\trianglelefteqslant
\lambda(y_j,z_k).
\]
Therefore,
\[
\hat{0}\lessdot x_{i_2}\lessdot y_j\lessdot z_k
\]
is an increasing maximal chain in the interval $[\hat{0},z_k]$, with
\[
\lambda(\hat{0},x_{i_2})
\trianglelefteqslant
\lambda(x_{i_2},y_j)
\trianglelefteqslant
\lambda(y_j,z_k).
\]

However, since $\lambda(\hat{0},x_{i_1})$ is the minimum of the three
labels,
\[
\lambda(\hat{0},x_{i_1})
\vartriangleleft
\lambda(\hat{0},x_{i_2}).
\]
Moreover, $x_{i_1}\lessdot y_{j'}\lessdot z_k$ for some $y_{j'}$, so
there is a maximal chain in $[\hat{0},z_k]$ whose first label is
$\lambda(\hat{0},x_{i_1})$. Hence, this chain is lexicographically
smaller than the increasing maximal chain through $x_{i_2}$. This
contradicts the EL-labeling property, which requires the unique
increasing maximal chain to be lexicographically minimal.
\end{proof}
\begin{claim}
    \(R\) is EL-shellable.
\end{claim}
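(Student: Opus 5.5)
The plan is to exhibit an explicit EL-labeling of $R$ and verify the EL condition interval by interval, exactly as was done for $Q$. Since the previous claims only rule out labelings that are rank-uniform, nothing prevents an ordinary EL-labeling. The labeling will deliberately break width uniformity, for instance by giving two covers above some $y_j$ the same label.

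\textbf{Bottom two ranks.} I would first fix the atom labels $\lambda(\hat{0},x_i)=i$. For each $y_j$ I then choose the edges $x_i\lessdot y_j$ so that the increasing chain in $[\hat{0},y_j]$ passes through the smallest atom below $y_j$, and every other edge into $y_j$ creates a descent. A first candidate is:
\begin{itemize}
\item for $y_1$: $\lambda(x_1,y_1)=2$ and $\lambda(x_2,y_1)=1$;
\item for $y_2$: $\lambda(x_1,y_2)=3$ and $\lambda(x_3,y_2)=1$;
\item for $y_3$: $\lambda(x_1,y_3)=4$ and $\lambda(x_2,y_3)=\lambda(x_3,y_3)=1$;
\item for $y_4$: $\lambda(x_2,y_4)=3$ and $\lambda(x_3,y_4)=2$.
\end{itemize}
This makes every rank-$2$ interval $[\hat{0},y_j]$ satisfy the EL condition automatically.

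\textbf{Top two ranks.} Next I label the edges $y_j\lessdot z_k$ and $z_k\lessdot\hat{1}$. For each rank-$2$ interval $[x_i,z_k]$ and each $[y_j,\hat{1}]$, this must produce exactly one increasing chain, and that chain must be lexicographically first. Some of these intervals are forced because they are single chains: $[x_1,z_2]$, $[x_1,z_5]$, $[x_2,z_1]$, $[x_3,z_1]$, $[x_3,z_2]$ and the intervals below $z_5$. In each forced interval the edge above must be weakly larger than the edge below, so I would handle these constraints first. For each $z_k$, I would then pick which $y_j\lessdot z_k$ carries the unique increasing chain from each $x_i$, and make the remaining edges descend. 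This is a small system of order constraints on at most a dozen unknown labels, and I would solve it by hand, with a computer check as backup.

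\textbf{Rank-$3$ intervals and $[\hat{0},\hat{1}]$.} Finally I check the intervals $[\hat{0},z_k]$, $[x_i,\hat{1}]$ and $[\hat{0},\hat{1}]$. For each, I list the maximal chains (at most about a dozen for $[\hat{0},z_3]$, and the chains of $[\hat{0},\hat{1}]$ run through all fourteen middle elements). I then confirm that exactly one chain is weakly increasing and that it is lexicographically minimal.

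\textbf{Main obstacle.} I expect the difficulty to lie in the rank-$3$ intervals, above all $[\hat{0},z_3]$ and $[\hat{0},\hat{1}]$. The argument of the last claim shows what goes wrong when each atom has an ascent above it. So the labeling must ensure that the increasing chain of $[\hat{0},\hat{1}]$ starts at $x_1$, and that from $x_2$ and $x_3$ every path upward eventually descends in a lexicographically later way. If the first candidate fails there, I would first try letting equal labels appear above some $y_j$, which is allowed because we no longer need uniform width. Failing that, I would obtain the labeling from a recursive atom ordering: order the atoms $x_1,x_2,x_3$ and, within each $[x_i,\hat{1}]$, order the atoms so that those covering earlier $x$'s come first. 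I would then pass to an EL-labeling and verify it directly on the finitely many intervals.
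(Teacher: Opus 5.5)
Your overall route (write down an explicit labeling and check every interval) is the same as the paper's. However, the proposal never actually exhibits a complete labeling, and the partial labeling you commit to cannot be completed. You set $\lambda(x_2,y_1)=\lambda(x_2,y_3)=1$ and $\lambda(x_3,y_2)=\lambda(x_3,y_3)=1$. Since $z_3$ covers all of $y_1,\dots,y_4$ and $z_4$ covers $y_2,y_3,y_4$, the rank-$2$ interval $[x_2,z_3]$ has atoms $y_1,y_3,y_4$ with labels $1,1,3$. Likewise, $[x_3,z_3]$ and $[x_3,z_4]$ have atoms $y_2,y_3,y_4$ with labels $1,1,2$. This is already fatal, by the argument of Lemma~\ref{lem:1}. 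The lexicographically first chain of $[x_2,z_3]$ starts with the label $1$, so it passes through $y_1$ or $y_3$. There are three cases:
\begin{itemize}
\item If $\lambda(y_1,z_3)$ and $\lambda(y_3,z_3)$ are both $\geq 1$, there are two increasing chains.
\item If exactly one of them is $\geq 1$, the other chain beginning with $1$ is lexicographically smaller than the increasing one.
\item If neither is $\geq 1$, the lexicographically first chain is not increasing.
\end{itemize}
The same applies to $[x_3,z_3]$ and $[x_3,z_4]$. So no choice of labels on the top two ranks can rescue this candidate. The obstruction lives in the rank-$2$ intervals $[x_i,z_k]$, not in the rank-$3$ intervals you flag as the main difficulty. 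Your first fallback (allowing ties above some $y_j$) does not address it. The rule you need is this: in an EL-labeling, for every $s<u$, the smallest label on edges $s\lessdot t$ with $t\leqslant u$ is carried by exactly one such $t$.

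The repair is to break these ties while keeping the descents in $[\hat{0},y_3]$. For instance, take $\lambda(x_2,y_3)=\tfrac32$ and $\lambda(x_3,y_3)=\tfrac52$. With this change your scheme does close up. Label the edges as follows:
\begin{itemize}
\item from $y_1$ to $z_1,z_2,z_3$ by $2,3,4$;
\item from $y_2$ to $z_1,z_3,z_4$ by $1,2,3$;
\item from $y_3$ to $z_3,z_4,z_5$ by $0,2,4$;
\item from $y_4$ to $z_2,z_3,z_4$ by $2,1,\tfrac12$;
\item $\lambda(z_k,\hat{1})=2,0,0,\tfrac12,0$ for $k=1,\dots,5$.
\end{itemize}
One checks interval by interval that this is an EL-labeling. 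The paper instead simply displays a different complete labeling. Your last-resort route is also not a valid general step. A recursive atom ordering yields a CL-labeling, and CL-shellability does not in general produce an EL-labeling.
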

\begin{proof}
Below, we provide an EL-labeling of $R$, together with the induced labelings on the intervals $[x_1,\hat{1}]$, $[x_2,\hat{1}]$, and $[x_3,\hat{1}]$, to give a clearer picture of the labeling.
    \begin{figure}[H]
    \centering
        \resizebox{0.6\textwidth}{!}{
    \begin{tikzpicture}[
        every node/.style={
            circle,
            draw=black,
            fill=red!15,
            inner sep=2pt,
            minimum size=7mm
        },
        edge/.style={
            draw=blue!70!black,
            thick
        },
        edge label/.style={
            draw=none,
            fill=white,
            inner sep=1.2pt,
            font=\small,
            text=blue!70!black
        }
    ]


    \node (one) at (0,5) {$\hat{1}$};

    \node (z1) at (-5,3) {$z_1$};
    \node (z2) at (-2.5,3) {$z_2$};
    \node (z3) at (0,3) {$z_3$};
    \node (z4) at (2.5,3) {$z_4$};
    \node (z5) at (5,3) {$z_5$};

    \node (y1) at (-3.7,1) {$y_1$};
    \node (y2) at (-1.3,1) {$y_2$};
    \node (y3) at (1.3,1) {$y_3$};
    \node (y4) at (3.7,1) {$y_4$};

    \node (x1) at (-2,-1) {$x_1$};
    \node (x2) at (0,-1) {$x_2$};
    \node (x3) at (2,-1) {$x_3$};

    \node (zero) at (0,-3) {$\hat{0}$};


    \draw[edge] (one) -- (z1)
        node[pos=.55, above left=3pt, edge label] {$1$};

    \draw[edge] (one) -- (z2)
        node[pos=.55, above left=3pt, edge label] {$1$};

    \draw[edge] (one) -- (z3)
        node[pos=.52, left=5pt, edge label] {$1$};

    \draw[edge] (one) -- (z4)
        node[pos=.55, above right=3pt, edge label] {$1$};

    \draw[edge] (one) -- (z5)
        node[pos=.55, above right=3pt, edge label] {$1$};


    \draw[edge] (z1) -- (y1)
        node[pos=.63, left=3pt, edge label] {$1$};

    \draw[edge] (z1) -- (y2)
        node[pos=.95, left=2pt, edge label] {$1$};

    \draw[edge] (z2) -- (y1)
        node[pos=.62, left=3pt, edge label] {$2$};

    \draw[edge] (z2) -- (y4)
        node[pos=.96, left=2pt, edge label] {$3$};

    \draw[edge] (z3) -- (y1)
        node[pos=.95, right=0pt, edge label] {$4$};

    \draw[edge] (z3) -- (y2)
        node[pos=.62, left=3pt, edge label] {$2$};

    \draw[edge] (z3) -- (y3)
        node[pos=.92, left=3pt, edge label] {$1$};

    \draw[edge] (z3) -- (y4)
        node[pos=.92, above=2pt, edge label] {$2$};

    \draw[edge] (z4) -- (y2)
        node[pos=.88, right=2pt, edge label] {$4$};

    \draw[edge] (z4) -- (y3)
        node[pos=.74, left=3pt, edge label] {$3$};

    \draw[edge] (z4) -- (y4)
        node[pos=.62, right=3pt, edge label] {$1$};

    \draw[edge] (z5) -- (y3)
        node[pos=.78, left=2pt, edge label] {$4$};


    \draw[edge] (y1) -- (x1)
        node[pos=.62, left=3pt, edge label] {$1$};

    \draw[edge] (y1) -- (x2)
        node[pos=.82, below=2pt, edge label] {$1$};

    \draw[edge] (y2) -- (x1)
        node[pos=.62, left=3pt, edge label] {$3$};

    \draw[edge] (y2) -- (x3)
        node[pos=.78, below=2pt, edge label] {$1$};

    \draw[edge] (y3) -- (x1)
        node[pos=.92, above=2pt, edge label] {$4$};

    \draw[edge] (y3) -- (x2)
        node[pos=.62, left=3pt, edge label] {$3$};

    \draw[edge] (y3) -- (x3)
        node[pos=.62, right=3pt, edge label] {$4$};

    \draw[edge] (y4) -- (x2)
        node[pos=.82, above=2pt, edge label] {$4$};

    \draw[edge] (y4) -- (x3)
        node[pos=.62, right=3pt, edge label] {$3$};


    \draw[edge] (x1) -- (zero)
        node[pos=.52, left=4pt, edge label] {$1$};

    \draw[edge] (x2) -- (zero)
        node[pos=.52, left=4pt, edge label] {$4$};

    \draw[edge] (x3) -- (zero)
        node[pos=.52, right=4pt, edge label] {$6$};

    \end{tikzpicture}}
    \caption{El-labeling of \(R\)}
\end{figure}
\begin{figure}[H]
    \centering
        \resizebox{0.6\textwidth}{!}{
    \begin{tikzpicture}[
        every node/.style={
            circle,
            draw=black,
            fill=red!15,
            inner sep=2pt,
            minimum size=7mm
        },
        edge/.style={
            draw=blue!70!black,
            thick
        },
        edge label/.style={
            draw=none,
            fill=white,
            inner sep=1.2pt,
            font=\small,
            text=blue!70!black
        }
    ]


    \node (one) at (0,5) {$\hat{1}$};

    \node (z1) at (-5,3) {$z_1$};
    \node (z2) at (-2.5,3) {$z_2$};
    \node (z3) at (0,3) {$z_3$};
    \node (z4) at (2.5,3) {$z_4$};
    \node (z5) at (5,3) {$z_5$};

    \node (y1) at (-2.5,1) {$y_1$};
    \node (y2) at (0,1) {$y_2$};
    \node (y3) at (2.5,1) {$y_3$};

    \node (x1) at (0,-1) {$x_1$};


    \draw[edge] (one) -- (z1)
        node[pos=.52, above left=3pt, edge label] {$1$};

    \draw[edge] (one) -- (z2)
        node[pos=.52, above left=3pt, edge label] {$1$};

    \draw[edge] (one) -- (z3)
        node[pos=.52, left=5pt, edge label] {$1$};

    \draw[edge] (one) -- (z4)
        node[pos=.52, above right=3pt, edge label] {$1$};

    \draw[edge] (one) -- (z5)
        node[pos=.52, above right=3pt, edge label] {$1$};


    \draw[edge] (z1) -- (y1)
        node[pos=.65, left=3pt, edge label] {$1$};

    \draw[edge] (z1) -- (y2)
        node[pos=.82, below=2pt, edge label] {$1$};

    \draw[edge] (z2) -- (y1)
        node[pos=.55, left=4pt, edge label] {$2$};

    \draw[edge] (z3) -- (y1)
        node[pos=.68, above left=2pt, edge label] {$4$};

    \draw[edge] (z3) -- (y2)
        node[pos=.72, left=4pt, edge label] {$2$};

    \draw[edge] (z3) -- (y3)
        node[pos=.88, above left=2pt, edge label] {$1$};

    \draw[edge] (z4) -- (y2)
        node[pos=.72, above left=2pt, edge label] {$4$};

    \draw[edge] (z4) -- (y3)
        node[pos=.52, right=0pt, edge label] {$3$};

    \draw[edge] (z5) -- (y3)
        node[pos=.58, above right=2pt, edge label] {$4$};


    \draw[edge] (y1) -- (x1)
        node[pos=.52, left=4pt, edge label] {$1$};

    \draw[edge] (y2) -- (x1)
        node[pos=.52, left=4pt, edge label] {$3$};

    \draw[edge] (y3) -- (x1)
        node[pos=.52, right=4pt, edge label] {$4$};

    \end{tikzpicture}}
    \caption{El-labeling of \([x_{1},\hat{1}]\)}
\end{figure}
\begin{figure}[H]
    \centering
        \resizebox{0.6\textwidth}{!}{
    \begin{tikzpicture}[
        every node/.style={
            circle,
            draw=black,
            fill=red!15,
            inner sep=2pt,
            minimum size=7mm
        },
        edge/.style={
            draw=blue!70!black,
            thick
        },
        edge label/.style={
            draw=none,
            fill=white,
            inner sep=1.2pt,
            font=\small,
            text=blue!70!black
        }
    ]


    \node (one) at (0,5) {$\hat{1}$};

    \node (z1) at (-5,3) {$z_1$};
    \node (z2) at (-2.5,3) {$z_2$};
    \node (z3) at (0,3) {$z_3$};
    \node (z4) at (2.5,3) {$z_4$};
    \node (z5) at (5,3) {$z_5$};

    \node (y1) at (-2.5,1) {$y_1$};
    \node (y3) at (0,1) {$y_3$};
    \node (y4) at (2.5,1) {$y_4$};

    \node (x2) at (0,-1) {$x_2$};


    \draw[edge] (one) -- (z1)
        node[pos=.52, above left=3pt, edge label] {$1$};

    \draw[edge] (one) -- (z2)
        node[pos=.52, above left=3pt, edge label] {$1$};

    \draw[edge] (one) -- (z3)
        node[pos=.52, right=5pt, edge label] {$1$};

    \draw[edge] (one) -- (z4)
        node[pos=.52, above right=3pt, edge label] {$1$};

    \draw[edge] (one) -- (z5)
        node[pos=.52, above right=3pt, edge label] {$1$};


    \draw[edge] (z1) -- (y1)
        node[pos=.62, left=3pt, edge label] {$1$};

    \draw[edge] (z2) -- (y1)
        node[pos=.52, left=4pt, edge label] {$2$};

    \draw[edge] (z2) -- (y4)
        node[pos=.84, below=0pt, edge label] {$3$};

    \draw[edge] (z3) -- (y1)
        node[pos=.70, above left=0pt, edge label] {$4$};

    \draw[edge] (z3) -- (y3)
        node[pos=.72, left=0pt, edge label] {$1$};

    \draw[edge] (z3) -- (y4)
        node[pos=.98, above left=3pt, edge label] {$2$};

    \draw[edge] (z4) -- (y3)
        node[pos=.97, above right=3pt, edge label] {$3$};

    \draw[edge] (z4) -- (y4)
        node[pos=.67, right=0pt, edge label] {$1$};

    \draw[edge] (z5) -- (y3)
        node[pos=.90, below=0pt, edge label] {$4$};


    \draw[edge] (y1) -- (x2)
        node[pos=.52, left=4pt, edge label] {$1$};

    \draw[edge] (y3) -- (x2)
        node[pos=.52, left=4pt, edge label] {$3$};

    \draw[edge] (y4) -- (x2)
        node[pos=.52, right=4pt, edge label] {$4$};

    \end{tikzpicture}}
    \caption{El-labeling of \([x_{2},\hat{1}]\)}
\end{figure}
\begin{figure}[H]
    \centering
        \resizebox{0.6\textwidth}{!}{
    \begin{tikzpicture}[
        every node/.style={
            circle,
            draw=black,
            fill=red!15,
            inner sep=2pt,
            minimum size=7mm
        },
        edge/.style={
            draw=blue!70!black,
            thick
        },
        edge label/.style={
            draw=none,
            fill=white,
            inner sep=1.2pt,
            font=\small,
            text=blue!70!black
        }
    ]


    \node (one) at (0,5) {$\hat{1}$};

    \node (z1) at (-5,3) {$z_1$};
    \node (z2) at (-2.5,3) {$z_2$};
    \node (z3) at (0,3) {$z_3$};
    \node (z4) at (2.5,3) {$z_4$};
    \node (z5) at (5,3) {$z_5$};

    \node (y2) at (-2.5,1) {$y_2$};
    \node (y3) at (0,1) {$y_3$};
    \node (y4) at (2.5,1) {$y_4$};

    \node (x3) at (0,-1) {$x_3$};


    \draw[edge] (one) -- (z1)
        node[pos=.52, above left=3pt, edge label] {$1$};

    \draw[edge] (one) -- (z2)
        node[pos=.52, above left=3pt, edge label] {$1$};

    \draw[edge] (one) -- (z3)
        node[pos=.52, left=5pt, edge label] {$1$};

    \draw[edge] (one) -- (z4)
        node[pos=.52, above right=3pt, edge label] {$1$};

    \draw[edge] (one) -- (z5)
        node[pos=.52, above right=3pt, edge label] {$1$};


    \draw[edge] (z1) -- (y2)
        node[pos=.65, left=3pt, edge label] {$1$};

    \draw[edge] (z2) -- (y4)
        node[pos=.85, below=0pt, edge label] {$3$};

    \draw[edge] (z3) -- (y2)
        node[pos=.70, above left=0pt, edge label] {$2$};

    \draw[edge] (z3) -- (y3)
        node[pos=.72, left=0pt, edge label] {$1$};

    \draw[edge] (z3) -- (y4)
        node[pos=.98, above left=3pt, edge label] {$2$};

    \draw[edge] (z4) -- (y2)
        node[pos=.78, below=2pt, edge label] {$4$};
    \draw[edge] (z4) -- (y3)
        node[pos=.97, above right=3pt, edge label] {$3$};

    \draw[edge] (z4) -- (y4)
        node[pos=.67, right=0pt, edge label] {$1$};

    \draw[edge] (z5) -- (y3)
        node[pos=.90, below=0pt, edge label] {$4$};


    \draw[edge] (y2) -- (x3)
        node[pos=.52, left=4pt, edge label] {$1$};

    \draw[edge] (y3) -- (x3)
        node[pos=.52, left=4pt, edge label] {$4$};

    \draw[edge] (y4) -- (x3)
        node[pos=.52, right=4pt, edge label] {$3$};

    \end{tikzpicture}}
    \caption{El-labeling of \([x_{3},\hat{1}]\)}
\end{figure}
\end{proof}
\section{Rank 3 counterexamples}
Let $S$ be the bounded graded poset whose Hasse diagram is drawn in Figure \ref{fig:counter-example-3} below. It is straightforward to verify that $S$ is rank-uniform. We will prove that $S$ is EL-shellable but not UMEL-shellable.
\begin{figure}[H]
    \centering
        \resizebox{0.8\textwidth}{!}{
    \begin{tikzpicture}[
        every node/.style={
            circle,
            draw=black,
            fill=red!15,
            inner sep=2pt,
            minimum size=7mm
        },
        edge/.style={
            draw=blue!70!black,
            thick
        }
    ]


    \node (one) at (0,5) {$\hat{1}$};

    \node (y1) at (-7,3) {$y_1$};
    \node (y2) at (-5,3) {$y_2$};
    \node (y3) at (-3,3) {$y_3$};
    \node (y4) at (-1,3) {$y_4$};
    \node (y5) at (1,3) {$y_5$};
    \node (y6) at (3,3) {$y_6$};
    \node (y7) at (5,3) {$y_7$};
    \node (y8) at (7,3) {$y_8$};

    \node (x1) at (-5,1) {$x_1$};
    \node (x2) at (-1.7,1) {$x_2$};
    \node (x3) at (1.7,1) {$x_3$};
    \node (x4) at (5,1) {$x_4$};

    \node (zero) at (0,-1.5) {$\hat{0}$};


    \draw[edge] (one) -- (y1);
    \draw[edge] (one) -- (y2);
    \draw[edge] (one) -- (y3);
    \draw[edge] (one) -- (y4);
    \draw[edge] (one) -- (y5);
    \draw[edge] (one) -- (y6);
    \draw[edge] (one) -- (y7);
    \draw[edge] (one) -- (y8);


    \draw[edge] (y1) -- (x1);
    \draw[edge] (y2) -- (x1);
    \draw[edge] (y3) -- (x1);

    \draw[edge] (y3) -- (x2);
    \draw[edge] (y4) -- (x2);
    \draw[edge] (y5) -- (x2);

    \draw[edge] (y4) -- (x3);
    \draw[edge] (y5) -- (x3);
    \draw[edge] (y6) -- (x3);

    \draw[edge] (y6) -- (x4);
    \draw[edge] (y7) -- (x4);
    \draw[edge] (y8) -- (x4);


    \draw[edge] (x1) -- (zero);
    \draw[edge] (x2) -- (zero);
    \draw[edge] (x3) -- (zero);
    \draw[edge] (x4) -- (zero);

    \end{tikzpicture}}
    \caption{Hasse diagram of $S$}
    \label{fig:counter-example-3}
\end{figure}
\begin{claim}
    \(S\) is not UMEL-shellable
\end{claim}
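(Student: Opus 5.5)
The plan is to reduce everything to the descents above $\hat{0}$, and to turn monotonicity at rank $0$ into a purely combinatorial condition on how the four atoms are ordered by their labels.

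Suppose $\lambda$ is an EL-labeling making $(S,\lambda)$ a monotonic rank-uniform labeled poset, and write $a_i \coloneqq \lambda(\hat{0},x_i)$. The atoms $x_1,x_2,x_3,x_4$ are consecutively linked by rank-$2$ elements covering two atoms each: $y_3$ covers $x_1,x_2$; $y_4,y_5$ cover $x_2,x_3$; $y_6$ covers $x_3,x_4$. Every other $y_j$ covers exactly one atom.
\begin{itemize}
\item By Lemma~\ref{lem:1} applied to $[\hat{0},y_3]$, $[\hat{0},y_4]$ and $[\hat{0},y_6]$, we get $a_1\neq a_2$, $a_2\neq a_3$ and $a_3\neq a_4$. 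Non-adjacent atoms may share a label.
\item Each $y_j$ is covered only by $\hat{1}$. Hence $\Des_{x_i}(y_j)$ is either empty or the single label $\lambda(y_j,\hat{1})$, so $\des_{\hat{0}}(x_i)$ is the number of $y_j\gtrdot x_i$ with $\lambda(y_j,\hat{1})\vartriangleleft\lambda(x_i,y_j)$. I will use $\des_{\hat{0}}(x_i)$ as the number of $y_j\gtrdot x_i$ with $\lambda(x_i,y_j)\vartriangleleft\lambda(\hat{0},x_i)$, i.e.\ the descents at the pair of edges $\hat{0}\lessdot x_i\lessdot y_j$. This is the quantity the earlier Claims use, and it is what $\des_0$ in Question~8.4 records.
\end{itemize}

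\textbf{Step 1: compute the descents.} Fix $y_j$. If it covers only $x_i$, then $[\hat{0},y_j]$ is a single chain, which must be increasing, so it contributes no descent at $x_i$. If it covers $x_i$ and $x_{i'}$ with $a_i\vartriangleleft a_{i'}$, then lexicographic minimality forces the increasing chain of $[\hat{0},y_j]$ through $x_i$. The chain through $x_{i'}$ is then not weakly increasing, giving exactly one descent at $x_{i'}$ and none at $x_i$. Consequently
\[
\des_{\hat{0}}(x_i)=\sum_{x_{i'}\text{ adjacent to }x_i,\ a_{i'}\vartriangleleft a_i} w(x_i,x_{i'}),
\]
on the weighted path $x_1 - x_2 - x_3 - x_4$ with weights $1,2,1$. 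In particular $\des_{\hat{0}}(x_1),\des_{\hat{0}}(x_4)\leqslant 1$.

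\textbf{Step 2: the two constraints.} Uniform descent says that atoms with equal labels have equal $\des_{\hat{0}}$. Monotonicity says $\des_{\hat{0}}$ is weakly increasing in the label. Separately, Lemma~\ref{lem:2} says exactly one atom has $\des_{\hat{0}}=0$.

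\textbf{Step 3: case split on the atoms carrying the maximal label $M$.}
\begin{itemize}
\item Suppose some $x_1$ or $x_4$ carries $M$. Its descent count is at most $1$, so by monotonicity every atom has $\des_{\hat{0}}\leqslant 1$. But whichever of $x_2,x_3$ has the larger label receives weight $2$ from the other, so its count is at least $2$, a contradiction.
\item Otherwise the maximum is attained by exactly one of $x_2,x_3$, since they are adjacent and so cannot share a label. By the symmetry $x_i\leftrightarrow x_{5-i}$ assume it is $x_2$. Then $x_1$ has its only neighbour above it, so $\des_{\hat{0}}(x_1)=0$. Among $x_3$ and $x_4$, which are adjacent with distinct labels, the smaller-labelled one has its relevant neighbours above it and so has $\des_{\hat{0}}=0$. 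This produces two atoms with zero descents, contradicting Lemma~\ref{lem:2}.
\end{itemize}
Both cases are impossible, so $S$ is not UMEL-shellable.

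The main obstacle is Step 1, namely correctly identifying $\des_{\hat{0}}(x_i)$ with the weighted count of lower-labelled neighbours. This needs care with the EL conventions: that the lexicographically minimal chain through a two-atom element $y_j$ passes through the smaller atom label, and that the other chain then has a genuine descent rather than an equality. Once this identification is in place, Step 3 is a short finite check.
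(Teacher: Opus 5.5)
Your strategy is sound and uses the same ingredients as the paper: lexicographic forcing on the intervals $[\hat 0,y_j]$, Lemma~\ref{lem:2} to forbid two atoms without descents, and monotonicity to compare an end atom with the larger of $x_2,x_3$. Splitting on where the maximal label sits is just a more symmetric organization of the paper's argument.

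The gap is in how you count descents. By definition, $\Des_{\hat 0}(x_i)$ is a set of indices $j\in[\ell']$. So $\des_{\hat 0}(x_i)$ counts the \emph{distinct labels} above $x_i$ that are strictly smaller than $a_i$, not the \emph{covers} $y_j\gtrdot x_i$ carrying such labels; two covers with the same descending label count once. Your first bullet, which equates $\des_{\hat 0}(x_i)$ with the number of $y_j$ satisfying $\lambda(y_j,\hat 1)\vartriangleleft\lambda(x_i,y_j)$, describes yet another quantity and should be dropped. The earlier claims do not use your cover count either: for $Q$, the paper proves $i\neq j$ before concluding $\des_{x_2}(y_3)=2$.

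Consequently the key step of Case~1 is not justified, namely that the larger of $x_2,x_3$ ``receives weight $2$, so its count is at least $2$''. Suppose $a_2\vartriangleleft a_3$ and $\lambda(x_3,y_4)=\lambda(x_3,y_5)$. Then the covers $y_4,y_5$ contribute a single descending label, and $\des_{\hat 0}(x_3)=1$ would not contradict $\des_{\hat 0}(x_1)\leqslant 1$.

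The repair is short. By Corollary~\ref{cor:1}, the minimal label above $x_3$ is carried by exactly one cover. So if $\lambda(x_3,y_4)=\lambda(x_3,y_5)=c$, then $c$ is not minimal, and the minimal label $m$ above $x_3$ satisfies $m\vartriangleleft c\vartriangleleft a_3$. Otherwise $\lambda(x_3,y_4)\neq\lambda(x_3,y_5)$ are already two distinct descending labels. Either way $\des_{\hat 0}(x_3)\geqslant 2$ (symmetrically for $x_2$), and Case~1 closes.

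This is exactly the point where the paper asserts $\lambda(x_3,y_4)\neq\lambda(x_3,y_5)$. Note that Lemma~\ref{lem:1} does not literally apply there, since $\hat 1$ also covers $y_6$; Corollary~\ref{cor:1} is the right justification.

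Case~2 only uses whether $\des_{\hat 0}$ vanishes, which is the same under either count, so it stands as written. It shows that in any EL-labeling of $S$ the maximal atom label lies on $x_1$ or $x_4$, which is a tidier description than the paper's sequence of forced chains.
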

\begin{proof}
    Suppose that $\lambda$ is a labeling of $S$ such that $(S,\lambda)$ is a
    monotonic rank-uniform labeled poset. By Corollary~\ref{cor:1}, we have
    \[
    \omega_{\hat{0}}(1)=1.
    \]
    Without loss of generality, we may assume that the minimum label on an edge
    covering $\hat{0}$ is either
    \[
    \lambda(\hat{0},x_1)
    \qquad\text{or}\qquad
    \lambda(\hat{0},x_2).
    \]
    
    Since $\hat{0}\lessdot x_4\lessdot y_7$ and
    $\hat{0}\lessdot x_4\lessdot y_8$ are forced to be increasing, if
    $\hat{0}\lessdot x_4\lessdot y_6$ were also increasing, then
    $\des_{\hat{0}}(x_4)=0$, contradicting Lemma~\ref{lem:2}.
    
    Therefore, in the interval $[\hat{0},y_6]$, the chain
    $\hat{0}\lessdot x_3\lessdot y_6$ must be increasing. Now suppose that
    $\hat{0}\lessdot x_3\lessdot y_5$ is also increasing. By the
    EL-labeling property, this chain must be lexicographically minimal in
    $[\hat{0},y_5]$, and hence
    \[
    \lambda(\hat{0},x_3)\trianglelefteqslant\lambda(\hat{0},x_2).
    \]
    By Corollary~\ref{cor:1}, these two labels are distinct, so
    \[
    \lambda(\hat{0},x_3)\vartriangleleft\lambda(\hat{0},x_2).
    \]
    It follows that $\hat{0}\lessdot x_3\lessdot y_4$ is also increasing.
    Consequently, $\des_{\hat{0}}(x_3)=0$, again contradicting
    Lemma~\ref{lem:2}.
        
    Therefore, the chains
    \[
    \hat{0}\lessdot x_3\lessdot y_4
    \qquad\text{and}\qquad
    \hat{0}\lessdot x_3\lessdot y_5
    \]
    must be decreasing. Moreover, by Lemma~\ref{lem:1},
    \[
    \lambda(x_3,y_4)\neq\lambda(x_3,y_5).
    \]
    Hence,
    \[
    \operatorname{des}_{\hat{0}}(x_3)=2.
    \]
    On the other hand,
    \[
    \operatorname{des}_{\hat{0}}(x_4)=1.
    \]
    By monotonicity, we must therefore have
    \[
    \lambda(\hat{0},x_4)
    \trianglelefteqslant
    \lambda(\hat{0},x_3).
    \]
    Since these labels are distinct, Lemma~\ref{lem:1} gives
    \[
    \lambda(\hat{0},x_4)
    \vartriangleleft
    \lambda(\hat{0},x_3).
    \]
    
    However, we have already shown that
    \[
    \hat{0}\lessdot x_3\lessdot y_6
    \]
    is increasing. In the interval $[\hat{0},y_6]$, the maximal chain through
    $x_4$ therefore begins with a strictly smaller label than the increasing
    maximal chain through $x_3$. Thus, the increasing maximal chain is not
    lexicographically minimal, contradicting the EL-labeling property.
    
    Hence, $S$ does not admit a monotonic rank-uniform labeling, and therefore
    $S$ is not UMEL-shellable.
\end{proof}
\begin{claim}
    \(S\) is EL-shellable.
\end{claim}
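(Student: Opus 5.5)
The plan is to prove the claim by writing down one explicit EL-labeling of $S$ and checking the definition on every interval of length at least $2$. Since $\Lambda$ may be any totally ordered set, I will take labels in $\mathbb{R}$. The idea is to mimic the structure forced in the proof of the previous claim, but to break monotonicity on purpose.

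\textbf{Atom labels and the intervals $[\hat 0,y_k]$.} Put
\[
\lambda(\hat0,x_1)=1,\quad \lambda(\hat0,x_2)=2,\quad \lambda(\hat0,x_3)=3,\quad \lambda(\hat0,x_4)=4 .
\]
Then in each interval $[\hat0,y_k]$ with two maximal chains, the chain through the atom with the smaller label is lexicographically smaller. So it suffices to make that chain increasing and the other one decreasing. Intervals with a single chain only need that chain to be increasing. This dictates the middle labels:
\[
\lambda(x_1,y_1)=1,\ \lambda(x_1,y_2)=2,\ \lambda(x_1,y_3)=3,\qquad
\lambda(x_2,y_3)=1,\ \lambda(x_2,y_4)=2,\ \lambda(x_2,y_5)=2.5,
\]
\[
\lambda(x_3,y_4)=2.5,\ \lambda(x_3,y_5)=2,\ \lambda(x_3,y_6)=3.5,\qquad
\lambda(x_4,y_6)=3,\ \lambda(x_4,y_7)=4,\ \lambda(x_4,y_8)=5.
\]
Writing each chain through $x_i$ as the pair $(\lambda(\hat0,x_i),\lambda(x_i,y_k))$, the checks are:
\begin{itemize}[leftmargin=23pt]
\item $[\hat0,y_3]$: $(1,3)$ is increasing and $(2,1)$ is decreasing.
\item $[\hat0,y_4]$: $(2,2)$ is increasing and $(3,2.5)$ is decreasing.
\item $[\hat0,y_5]$: $(2,2.5)$ is increasing and $(3,2)$ is decreasing.
\item $[\hat0,y_6]$: $(3,3.5)$ is increasing and $(4,3)$ is decreasing.
\item $[\hat0,y_1]$, $[\hat0,y_2]$, $[\hat0,y_7]$, $[\hat0,y_8]$: each has a single chain, and it is increasing.
\end{itemize}

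\textbf{Top labels $\mu_k=\lambda(y_k,\hat1)$ and the intervals $[x_i,\hat1]$.} Each interval $[x_i,\hat1]$ has three chains. The lexicographically minimal one passes through the $y_k$ with the smallest label above $x_i$. We need that chain to be increasing and the other two to be descents. This gives the following constraints:
\begin{itemize}[leftmargin=23pt]
\item from $x_1$: $\mu_1\ge1$, $\mu_2<2$, $\mu_3<3$;
\item from $x_2$: $\mu_3\ge1$, $\mu_4<2$, $\mu_5<2.5$;
\item from $x_3$: $\mu_5\ge2$, $\mu_4<2.5$, $\mu_6<3.5$;
\item from $x_4$: $\mu_6\ge3$, $\mu_7<4$, $\mu_8<5$.
\end{itemize}
A consistent choice is
\[
\mu_1=\mu_2=\mu_3=\mu_4=\mu_7=\mu_8=1,\qquad \mu_5=2,\qquad \mu_6=3.2 .
\]

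\textbf{The whole interval $[\hat0,\hat1]$.} The lexicographically minimal chain is $\hat0\lessdot x_1\lessdot y_1\lessdot\hat1$, with labels $1,1,1$, which is increasing. Going through the other ten maximal chains, each contains a descent:
\begin{itemize}[leftmargin=23pt]
\item $(1,2,1)$ and $(1,3,1)$ through $x_1$;
\item $(2,1,\cdot)$, $(2,2,1)$ and $(2,2.5,2)$ through $x_2$;
\item $(3,2.5,\cdot)$, $(3,2,\cdot)$ and $(3,3.5,3.2)$ through $x_3$;
\item $(4,3,\cdot)$, $(4,4,1)$ and $(4,5,1)$ through $x_4$.
\end{itemize}
So the increasing chain is unique. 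Intervals of length $1$ are trivial, so this completes the verification that $\lambda$ is an EL-labeling.

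\textbf{Main obstacle.} The delicate point is the element $y_6$, which is covered by both $x_3$ and $x_4$. The edge $x_4\lessdot y_6$ must be the minimal label above $x_4$ and start an ascent. The edge $x_3\lessdot y_6$ must be the largest label above $x_3$ and start a descent. At the same time, the chain through $x_3$ must be increasing in $[\hat0,y_6]$. All of this is resolved by putting $\mu_6$ strictly between $\lambda(x_4,y_6)=3$ and $\lambda(x_3,y_6)=3.5$. If one prefers integer labels, the real labels can be replaced by order-preserving integers.
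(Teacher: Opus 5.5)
Your proposal is correct and follows the same route as the paper: exhibit one explicit EL-labeling of $S$ (atom labels $1<2<3<4$) and check every interval of length at least $2$. Your labels pass all the checks for the eight intervals $[\hat{0},y_k]$, the four intervals $[x_i,\hat{1}]$, and $[\hat{0},\hat{1}]$; the only slip is that $[\hat{0},\hat{1}]$ has eleven non-minimal maximal chains, not ten, and you list all eleven correctly. Your explicit verification is in fact more reliable than the paper's figure: there, as typeset, $\lambda(\hat{0},x_2)=\lambda(x_2,y_5)=2$ and $\lambda(\hat{0},x_3)=\lambda(x_3,y_5)=3$, so $[\hat{0},y_5]$ contains two weakly increasing maximal chains, whereas your choice $\lambda(x_2,y_5)=2.5$, $\lambda(x_3,y_5)=2$ avoids this.
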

\begin{proof}
    Figure \ref{fig:counter-example-3-EL} below presents an EL-labeling of \(S\) which can be verified by checking the intervals of \(S\).
    \begin{figure}[H]
    \centering
        \resizebox{0.8\textwidth}{!}{
    \begin{tikzpicture}[
        every node/.style={
            circle,
            draw=black,
            fill=red!15,
            inner sep=2pt,
            minimum size=7mm
        },
        edge/.style={
            draw=blue!70!black,
            thick
        },
        edge label/.style={
            draw=none,
            fill=white,
            inner sep=1.2pt,
            font=\small,
            text=blue!70!black
        }
    ]


    \node (one) at (0,5) {$\hat{1}$};

    \node (y1) at (-7,3) {$y_1$};
    \node (y2) at (-5,3) {$y_2$};
    \node (y3) at (-3,3) {$y_3$};
    \node (y4) at (-1,3) {$y_4$};
    \node (y5) at (1,3) {$y_5$};
    \node (y6) at (3,3) {$y_6$};
    \node (y7) at (5,3) {$y_7$};
    \node (y8) at (7,3) {$y_8$};

    \node (x1) at (-5,1) {$x_1$};
    \node (x2) at (-1.7,1) {$x_2$};
    \node (x3) at (1.7,1) {$x_3$};
    \node (x4) at (5,1) {$x_4$};

    \node (zero) at (0,-1.5) {$\hat{0}$};


    \draw[edge] (one) -- (y1)
        node[pos=.58, above left=3pt, edge label] {$1$};

    \draw[edge] (one) -- (y2)
        node[pos=.58, above left=3pt, edge label] {$1$};

    \draw[edge] (one) -- (y3)
        node[pos=.58, above left=2pt, edge label] {$1$};

    \draw[edge] (one) -- (y4)
        node[pos=.58, left=4pt, edge label] {$1$};

    \draw[edge] (one) -- (y5)
        node[pos=.58, right=4pt, edge label] {$1$};

    \draw[edge] (one) -- (y6)
        node[pos=.58, above right=2pt, edge label] {$1$};

    \draw[edge] (one) -- (y7)
        node[pos=.58, above right=3pt, edge label] {$1$};

    \draw[edge] (one) -- (y8)
        node[pos=.58, above right=3pt, edge label] {$1$};


    \draw[edge] (y1) -- (x1)
        node[pos=.58, left=4pt, edge label] {$1$};

    \draw[edge] (y2) -- (x1)
        node[pos=.52, left=0pt, edge label] {$2$};

    \draw[edge] (y3) -- (x1)
        node[pos=.58, above left=0pt, edge label] {$2$};

    \draw[edge] (y3) -- (x2)
        node[pos=.58, left=2pt, edge label] {$1$};

    \draw[edge] (y4) -- (x2)
        node[pos=.52, left=0pt, edge label] {$2$};

    \draw[edge] (y5) -- (x2)
        node[pos=.54, left=2pt, edge label] {$2$};

    \draw[edge] (y4) -- (x3)
        node[pos=.84, left=6pt, edge label] {$1$};

    \draw[edge] (y5) -- (x3)
        node[pos=.52, left=0pt, edge label] {$3$};

    \draw[edge] (y6) -- (x3)
        node[pos=.58, above left=2pt, edge label] {$3$};

    \draw[edge] (y6) -- (x4)
        node[pos=.58, left=4pt, edge label] {$1$};

    \draw[edge] (y7) -- (x4)
        node[pos=.52, left=0pt, edge label] {$4$};

    \draw[edge] (y8) -- (x4)
        node[pos=.50, left=3pt, edge label] {$4$};


    \draw[edge] (zero) -- (x1)
        node[pos=.50, above left=3pt, edge label] {$1$};

    \draw[edge] (zero) -- (x2)
        node[pos=.50, left=4pt, edge label] {$2$};

    \draw[edge] (zero) -- (x3)
        node[pos=.50, right=4pt, edge label] {$3$};

    \draw[edge] (zero) -- (x4)
        node[pos=.50, above right=3pt, edge label] {$4$};
    
    \end{tikzpicture}}
    \caption{EL-labeling of $S$}
    \label{fig:counter-example-3-EL}
\end{figure}
\end{proof}
It is straightforward to verify that every EL-shellable poset of rank \(2\) is UMEL-shellable. The remaining question is whether every rank-\(3\) poset \(P\) admits an EL-labeling \(\lambda\) such that \((P,\lambda)\) is a rank-uniform EL-labeled poset.

\section{Rank-uniform labeled posets of rank-3}
In this section, we prove that every rank-$3$ poset $P$ admits an EL-labeling
$\lambda$ such that $(P,\lambda)$ is a rank-uniform EL-labeled poset. We begin
by introducing the necessary definitions.

\begin{definition}[Atoms Insertion Algorithm]\label{AIA}
    Let $P$ be a bounded graded rank-uniform poset of rank $3$ with \(|\At(P)|=n\) and \( W_1(s)=k\) for every atom $s\in P$. Thus, every atom of $P$ is covered by exactly
    $k$ elements of rank $2$. We may construct $P$ by inserting its atoms one
    at a time as follows.
    
    \begin{itemize}
        \item Choose an ordering of the atoms of $P$,
        \[
        (x_1,x_2,\ldots,x_n).
        \]
    
        \item Begin with the atom $x_1$. Let \(y_1,y_2,\ldots,y_k\) be the $k$ elements covering $x_1$, and define
        \[
        P_1=
        \{\hat{0},x_1,y_1,\ldots,y_k,\hat{1}\},
        \]
        with cover relations
        \[
        \hat{0}\lessdot x_1\lessdot y_j\lessdot\hat{1},
        \qquad 1\leq j\leq k.
        \]
    
        \item Suppose that $P_{i-1}$ has been constructed using the atoms
        $x_1,\ldots,x_{i-1}$. To insert $x_i$, consider the $k$ elements of
        $P$ that cover $x_i$. We partition them into two sets:
        \[
        \operatorname{icov}(x_i)
        =
        \{y_{i_1},\ldots,y_{i_t}\},
        \]
        consisting of those elements that are already present in $P_{i-1}$, and
        \[
        \operatorname{ocov}(x_i)
        =
        \{y_{i_{t+1}},\ldots,y_{i_k}\},
        \]
        consisting of those elements that have not yet appeared and are inserted
        together with $x_i$.
    
        We then obtain $P_i$ from $P_{i-1}$ by adding $x_i$, the elements in
        $\operatorname{ocov}(x_i)$, and the cover relations
        \[
        \hat{0}\lessdot x_i\lessdot y
        \qquad
        \text{for every }
        y\in \operatorname{icov}(x_i)\cup\operatorname{ocov}(x_i),
        \]
        together with
        \[
        y\lessdot\hat{1}
        \qquad
        \text{for every }
        y\in\operatorname{ocov}(x_i).
        \]
    \end{itemize}
    After all $n$ atoms have been inserted, we obtain $P_n=P$.

    \begin{figure}[H]
    \centering
    \resizebox{\textwidth}{!}{
    \begin{tikzpicture}[
        vertex/.style={
            circle,
            draw=black,
            fill=red!15,
            inner sep=2pt,
            minimum size=7mm
        },
        edge/.style={
            draw=blue!70!black,
            thick
        }
    ]

    \begin{scope}[xshift=0cm]

        \node[vertex] (t1) at (0,4) {$\hat{1}$};

        \node[vertex] (y11) at (-1.5,2) {$y_1$};
        \node[vertex] (y12) at (0,2) {$y_2$};
        \node[vertex] (y13) at (1.5,2) {$y_3$};

        \node[vertex] (x11) at (0,0) {$x_1$};

        \node[vertex] (b1) at (0,-2) {$\hat{0}$};

        \draw[edge] (t1) -- (y11);
        \draw[edge] (t1) -- (y12);
        \draw[edge] (t1) -- (y13);

        \draw[edge] (y11) -- (x11);
        \draw[edge] (y12) -- (x11);
        \draw[edge] (y13) -- (x11);

        \draw[edge] (x11) -- (b1);

    \end{scope}


    \draw[->, very thick]
        (2.3,1) -- (3.7,1);

    \begin{scope}[xshift=7cm]

        \node[vertex] (t2) at (0,4) {$\hat{1}$};

        \node[vertex] (y21) at (-2.25,2) {$y_1$};
        \node[vertex] (y22) at (-0.75,2) {$y_2$};
        \node[vertex] (y23) at (0.75,2) {$y_3$};
        \node[vertex] (y24) at (2.25,2) {$y_4$};

        \node[vertex] (x21) at (-1,0) {$x_1$};
        \node[vertex] (x22) at (1,0) {$x_2$};

        \node[vertex] (b2) at (0,-2) {$\hat{0}$};

        \draw[edge] (t2) -- (y21);
        \draw[edge] (t2) -- (y22);
        \draw[edge] (t2) -- (y23);
        \draw[edge] (t2) -- (y24);

        \draw[edge] (y21) -- (x21);
        \draw[edge] (y22) -- (x21);
        \draw[edge] (y23) -- (x21);

        \draw[edge] (y22) -- (x22);
        \draw[edge] (y23) -- (x22);
        \draw[edge] (y24) -- (x22);

        \draw[edge] (x21) -- (b2);
        \draw[edge] (x22) -- (b2);

    \end{scope}


    \draw[->, very thick]
        (10,1) -- (11.4,1);

    \begin{scope}[xshift=15cm]

        \node[vertex] (t3) at (0,4) {$\hat{1}$};

        \node[vertex] (y31) at (-3,2) {$y_1$};
        \node[vertex] (y32) at (-1.5,2) {$y_2$};
        \node[vertex] (y33) at (0,2) {$y_3$};
        \node[vertex] (y34) at (1.5,2) {$y_4$};
        \node[vertex] (y35) at (3,2) {$y_5$};

        \node[vertex] (x31) at (-1.5,0) {$x_1$};
        \node[vertex] (x32) at (0,0) {$x_2$};
        \node[vertex] (x33) at (1.5,0) {$x_3$};

        \node[vertex] (b3) at (0,-2) {$\hat{0}$};

        \draw[edge] (t3) -- (y31);
        \draw[edge] (t3) -- (y32);
        \draw[edge] (t3) -- (y33);
        \draw[edge] (t3) -- (y34);
        \draw[edge] (t3) -- (y35);

        \draw[edge] (y31) -- (x31);
        \draw[edge] (y32) -- (x31);
        \draw[edge] (y33) -- (x31);

        \draw[edge] (y32) -- (x32);
        \draw[edge] (y33) -- (x32);
        \draw[edge] (y34) -- (x32);

        \draw[edge] (y33) -- (x33);
        \draw[edge] (y34) -- (x33);
        \draw[edge] (y35) -- (x33);

        \draw[edge] (x31) -- (b3);
        \draw[edge] (x32) -- (b3);
        \draw[edge] (x33) -- (b3);

    \end{scope}

    \end{tikzpicture}
    }
    \caption{Example of the Atoms Insertion Algorithm}
\end{figure}
\end{definition}
\begin{claim}
    If \(P\) is EL-shellable, then we can choose an ordering of the atoms \((x_1,x_2,\ldots,x_n)\) such that \[ \operatorname{icov}(x_i)\neq\varnothing \qquad\text{for every } 2\leq i\leq n. \]
\end{claim}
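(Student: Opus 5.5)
The plan is to reduce the claim to a connectivity statement and then derive that connectivity from EL-shellability. Define the graph $G$ whose vertices are the atoms of $P$, with $x$ and $x'$ adjacent when they have a common upper cover of rank $2$. If $G$ is connected, a breadth-first (or any graph-search) ordering $(x_1,\ldots,x_n)$ starting from an arbitrary atom has the property that every $x_i$ with $i\geqslant 2$ is adjacent to some earlier $x_j$. Such a common cover $y$ of $x_i$ and $x_j$ lies in $P_{i-1}$, since it was inserted no later than with $x_j$. Hence $y\in\operatorname{icov}(x_i)$, so $\operatorname{icov}(x_i)\neq\varnothing$. The claim therefore reduces to showing that $G$ is connected.

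For connectivity, I would use the proper part $\bar P=P\setminus\{\zero,\un\}$, whose order complex is a graph: its vertices are the atoms and the rank-$2$ elements, and its edges are the cover relations between them. Every rank-$2$ element covers at least one atom. So $G$ is connected exactly when this order complex is connected. Now invoke the standard fact (Björner) that an EL-labeling induces a shelling of the order complex of $\bar P$, ordering maximal chains lexicographically by their labels. A shellable pure $1$-dimensional complex is connected: each facet after the first meets the union of the earlier ones in a nonempty $0$-dimensional subcomplex, since the empty intersection is only allowed for the first facet. Inductively, the union of the first $m$ facets stays connected. This gives connectivity of $G$.

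If a self-contained argument is preferred, I would argue by contradiction in the spirit of Lemma~\ref{lem:2}. Suppose the atoms split into nonempty classes $A$ and $B$ with no rank-$2$ element above atoms of both. The aim is to produce an increasing maximal chain of $[\zero,\un]$ through each class, contradicting uniqueness. For a class $C$, choose the lexicographically minimal maximal chain $\zero\lessdot a\lessdot y\lessdot\un$ among those with $a\in C$. Every atom below $y$ lies in $C$, so this chain is lexicographically minimal in $[\zero,y]$, and the EL property makes $\zero\lessdot a\lessdot y$ increasing. Likewise $a\lessdot y\lessdot\un$ is lexicographically minimal in $[a,\un]$, hence increasing. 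The whole chain is then increasing for both $C=A$ and $C=B$, giving two increasing maximal chains in $[\zero,\un]$, a contradiction.

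The main obstacle is the uniqueness-of-minimizer subtlety in this direct argument. When labels tie, ``lexicographically minimal'' in the definition must be read so that the increasing chain is the unique minimum, or at least so that a minimal chain within a subinterval is forced to be the increasing one. I would handle this by noting that any chain lexicographically minimal in $[\zero,y]$ or $[a,\un]$ must be increasing: if it had a descent, then replacing its tail by the increasing chain of the corresponding subinterval would give a lexicographically strictly smaller chain, which is the same trick used in Lemma~\ref{lem:1}. If that step becomes delicate, I would fall back on citing the shelling theorem as in the second paragraph.
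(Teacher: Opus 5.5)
Your proposal is correct and follows the paper's proof. Both reduce the claim to connectivity of the rank-$1$/rank-$2$ part of the Hasse diagram, which is the order complex of the proper part, and then use Bj\"orner's theorem that EL-shellable posets are shellable, together with the fact that a shellable $1$-dimensional complex is connected. Your direct argument spells out the paper's remark that one can instead argue as in Lemma~\ref{lem:2}. The tie-breaking worry there is moot: any chain whose label word equals the lexicographically minimal one is itself weakly increasing, so by uniqueness it is the increasing chain.
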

\begin{proof}
    The existence of such an ordering is equivalent to the connectedness of the subgraph of the Hasse diagram induced by the elements of ranks $1$ and $2$. Indeed, since $P$ has rank $3$, this graph is precisely the order complex of
    the proper part of $P$. By a result of Bj\"orner~\cite[Theorem~2.3]{bjorner1980},
    an EL-shellable poset is shellable. Hence, this graph is shellable and,
    being one-dimensional, must be connected. Alternatively, this fact can be
    proved directly using an argument similar to that in the proof of
    Lemma~\ref{lem:2}.
\end{proof}
\begin{maintheorem}\label{theorem-1}
    Let $\P$ be an EL-shellable poset of rank-3. If $\P$ is rank-uniform, then there is an EL-labeling $\lambda$ of $\P$ such that $(\P, \lambda)$ is a rank-uniform EL-labeled poset.
\end{maintheorem}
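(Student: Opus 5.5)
The plan is to build the labeling explicitly from an atom ordering supplied by the preceding Claim. Fix $n=|\At(\P)|$ and let $k=W_1(s)$ for every atom $s$. In rank $3$, every element of rank $2$ is covered only by $\hat{1}$. By the Claim following Definition~\ref{AIA}, there is an ordering $(x_1,\ldots,x_n)$ of the atoms with $\operatorname{icov}(x_i)\neq\varnothing$ for all $i\geq 2$. For an element $y$ of rank $2$, let $m(y)$ be the smallest index $i$ with $x_i\lessdot y$; equivalently, $y\in\operatorname{ocov}(x_{m(y)})$. Labels will be real numbers.

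\textbf{Step 1: define $\lambda$.}
\begin{itemize}
\item Put $\lambda(\hat{0},x_i)=i$ and $\lambda(y,\hat{1})=c:=3/2$ for every $y$.
\item Above $x_1$: choose one cover $y$ and set $\lambda(x_1,y)=6/5$. Label the other $k-1$ covers with distinct values larger than $n$.
\item Above $x_i$ with $i\geq 2$: choose one $y_0\in\operatorname{icov}(x_i)$ and set $\lambda(x_i,y_0)=0$. Label the remaining elements of $\operatorname{icov}(x_i)$ with distinct values in $(3/2,2)$, and the elements of $\operatorname{ocov}(x_i)$ with distinct values larger than $n$.
\end{itemize}
All labels above any given element are therefore distinct.

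\textbf{Step 2: check the EL property.} Only four kinds of interval need an argument, since intervals of length $1$ are trivial.
\begin{itemize}
\item $[x_i,\hat{1}]$. The chain through $y$ reads $(\lambda(x_i,y),3/2)$. Exactly one label above $x_i$ is below $3/2$, namely $6/5$ or $0$. So exactly one chain is increasing, and it starts with the smallest label, hence it is lexicographically minimal.
\item $[\hat{0},y]$. The chain through $x_{m(y)}$ is increasing: its second label is either $6/5\geq 1$ (when $m(y)=1$) or exceeds $n\geq m(y)$. For $i>m(y)$ we have $y\in\operatorname{icov}(x_i)$, so $\lambda(x_i,y)<2\leq i$ and that chain descends. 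The increasing chain has the smallest first label $m(y)$, so it is lexicographically minimal.
\item $[\hat{0},\hat{1}]$. An increasing chain needs $i\leq\lambda(x_i,y)\leq 3/2$. This forces $i=1$ and $\lambda(x_1,y)=6/5$: for $i\geq 2$ the labels are $0$, lie in $(3/2,2)$, or exceed $n$. That unique increasing chain begins with the minimal label $1$, so it is lexicographically minimal.
\end{itemize}

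\textbf{Step 3: check rank-uniformity.}
\begin{itemize}
\item Widths. Every label above every element occurs exactly once. Hence $\omega_{\hat{0}}=(1^n)$, each atom has $\omega=(1^k)$, and each rank-$2$ element has $\omega=(1)$.
\item Descents at rank $0$. The covers of $\hat{0}$ have pairwise distinct indices, and only one element has rank $0$, so there is nothing to check.
\item Descents at rank $2$. The only cover above $y$ is $\hat{1}$, carrying the largest label above $y$, so $\des_y(\hat{1})=0$ always.
\item Descents at rank $1$. For any atom $x$, the index-$1$ cover carries the unique label below $3/2$, so it has $\des=0$. Every other cover has label above $3/2$ and so $\des=1$. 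This is independent of the atom, so descents are uniform.
\end{itemize}
Hence $(\P,\lambda)$ is a rank-uniform EL-labeled poset.

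The only genuine obstacle is the $[\hat{0},y]$ check. It requires every non-minimal atom below $y$ to contribute a descending chain while each $x_i$ ($i\geq 2$) still has exactly one label below $c$. This is exactly where $\operatorname{icov}(x_i)\neq\varnothing$ is used: it supplies an edge that may carry the small label $0$ without creating an extra increasing chain, both in $[\hat{0},y_0]$ and in $[\hat{0},\hat{1}]$.
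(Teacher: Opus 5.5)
Your proposal is correct and is essentially the paper's own construction. It uses the same atom ordering with $\operatorname{icov}(x_i)\neq\varnothing$ and the same label placement: one edge into $\operatorname{icov}(x_i)$ is labeled below its top label, the other $\operatorname{icov}$ edges lie strictly between their top label and $\lambda(\hat{0},x_i)$, and the $\operatorname{ocov}$ edges lie above $\lambda(\hat{0},x_i)$, which you make explicit with real numbers and a single top label $3/2$. The only slip is the degenerate case $n=1$: there, labels ``larger than $n$'' above $x_1$ may fall below $3/2$ (even below $6/5$) and create extra increasing chains in $[x_1,\hat{1}]$, so you should choose them larger than $\max(n,2)$.
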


\begin{proof}
    We construct the labeling $\lambda$ simultaneously with $P$ using the
    Atoms Insertion Algorithm~\ref{AIA}. Let
    \[
    (x_1,x_2,\ldots,x_n)
    \]
    be an ordering of the atoms of $P$ such that
    \[
    \operatorname{icov}(x_i)\neq\varnothing
    \qquad\text{for every }2\leq i\leq n,
    \]
    as guaranteed by the previous claim. Label the edges between ranks $0$ and
    $1$ so that
    \[
    \lambda(\hat{0},x_1)
    \vartriangleleft
    \lambda(\hat{0},x_2)
    \vartriangleleft\cdots\vartriangleleft
    \lambda(\hat{0},x_n).
    \]
    
    We now construct the remaining labels inductively. Let
    $y_1,\ldots,y_k$ be the $k$ elements covering $x_1$. Choose the labels so
    that
    \[
    \lambda(\hat{0},x_1)
    \vartriangleleft
    \lambda(x_1,y_1)
    \vartriangleleft
    \lambda(x_1,y_2)
    \vartriangleleft\cdots\vartriangleleft
    \lambda(x_1,y_k).
    \]
    We then label the edges from rank $2$ to $\hat{1}$ so that
    \[
    \lambda(y_2,\hat{1})
    \vartriangleleft\cdots\vartriangleleft
    \lambda(y_k,\hat{1})
    \vartriangleleft
    \lambda(x_1,y_1)
    \vartriangleleft
    \lambda(y_1,\hat{1}).
    \]
    If $n\geq 2$, we further choose these labels so that
    \[
    \lambda(y_j,\hat{1})
    \vartriangleleft
    \lambda(\hat{0},x_2)
    \qquad\text{for every }1\leq j\leq k.
    \]
    Thus, $x_1\lessdot y_1\lessdot\hat{1}$ is the unique increasing maximal
    chain in $[x_1,\hat{1}]$, while
    \[
    \hat{0}\lessdot x_1\lessdot y_1\lessdot\hat{1}
    \]
    is the unique increasing maximal chain in $[\hat{0},\hat{1}]$.
    
    Suppose now that the edges of $P_{i-1}$ have been labeled, and write
    \[
    \operatorname{icov}(x_i)
    =
    \{y_{i_1},\ldots,y_{i_t}\},
    \qquad
    \operatorname{ocov}(x_i)
    =
    \{y_{i_{t+1}},\ldots,y_{i_k}\}.
    \]
    By the induction hypothesis,
    \[
    \lambda(y,\hat{1})
    \vartriangleleft
    \lambda(\hat{0},x_i)
    \qquad
    \text{for every }y\in\operatorname{icov}(x_i).
    \]
    Since $\operatorname{icov}(x_i)\neq\varnothing$, choose
    $y_{i_1}\in\operatorname{icov}(x_i)$ and assign the labels on the edges
    from $x_i$ to $\operatorname{icov}(x_i)$ so that
    \[
    \lambda(x_i,y_{i_1})
    \vartriangleleft
    \lambda(y_{i_1},\hat{1})
    \vartriangleleft
    \lambda(\hat{0},x_i),
    \]
    whereas, for $2\leq r\leq t$,
    \[
    \lambda(y_{i_r},\hat{1})
    \vartriangleleft
    \lambda(x_i,y_{i_r})
    \vartriangleleft
    \lambda(\hat{0},x_i).
    \]
    We may choose these labels so that
    \[
    \lambda(x_i,y_{i_1})
    \vartriangleleft
    \lambda(x_i,y_{i_2})
    \vartriangleleft\cdots\vartriangleleft
    \lambda(x_i,y_{i_t}).
    \]
    Hence,
    \[
    x_i\lessdot y_{i_1}\lessdot\hat{1}
    \]
    is the unique increasing maximal chain in $[x_i,\hat{1}]$, and it is
    lexicographically minimal.
    
    For the elements in $\operatorname{ocov}(x_i)$, choose the labels so that
    \[
    \lambda(\hat{0},x_i)
    \vartriangleleft
    \lambda(x_i,y_{i_{t+1}})
    \vartriangleleft\cdots\vartriangleleft
    \lambda(x_i,y_{i_k}),
    \]
    and assign their upper labels so that
    \[
    \lambda(y_{i_{t+1}},\hat{1})
    \vartriangleleft\cdots\vartriangleleft
    \lambda(y_{i_k},\hat{1})
    \vartriangleleft
    \lambda(\hat{0},x_i).
    \]
    Consequently, every chain through an element of
    $\operatorname{ocov}(x_i)$ has a descent before reaching $\hat{1}$.
    Moreover, if $i<n$, then
    \[
    \lambda(y,\hat{1})
    \vartriangleleft
    \lambda(\hat{0},x_i)
    \vartriangleleft
    \lambda(\hat{0},x_{i+1})
    \]
    for every $y\in\operatorname{ocov}(x_i)$, so the induction hypothesis is
    preserved.
    
    It remains to verify that $\lambda$ is an EL-labeling. For an interval
    $[\hat{0},y]$ with $y$ of rank $2$, let $x_i$ be the first atom in the
    chosen ordering that is covered by $y$. At the stage when $y$ is first
    inserted, we have
    \[
    \lambda(\hat{0},x_i)
    \vartriangleleft
    \lambda(x_i,y),
    \]
    so $\hat{0}\lessdot x_i\lessdot y$ is increasing. For every later atom
    $x_j$ covered by $y$, the element $y$ belongs to
    $\operatorname{icov}(x_j)$, and therefore
    \[
    \lambda(x_j,y)
    \vartriangleleft
    \lambda(\hat{0},x_j).
    \]
    Thus, $\hat{0}\lessdot x_i\lessdot y$ is the unique increasing maximal
    chain in $[\hat{0},y]$. Since $x_i$ is the first atom covering $y$, this
    chain is also lexicographically minimal.
    
    Similarly, for every atom $x_i$, the construction makes
    \[
    x_i\lessdot y_{i_1}\lessdot\hat{1}
    \]
    the unique increasing and lexicographically minimal maximal chain in
    $[x_i,\hat{1}]$. Finally,
    \[
    \hat{0}\lessdot x_1\lessdot y_1\lessdot\hat{1}
    \]
    is the unique increasing maximal chain in $[\hat{0},\hat{1}]$, and it is
    lexicographically minimal because $\lambda(\hat{0},x_1)$ is the smallest
    label on an edge covering $\hat{0}$. Hence, $\lambda$ is an EL-labeling of
    $P$.
    
    It remains to show that $(P,\lambda)$ is rank-uniform as a labeled poset.
    By construction, the labels on the edges above each atom are distinct.
    Since every atom is covered by exactly $k$ elements, the width condition is
    therefore the same for every atom. Furthermore, for each atom $x_i$, there
    is exactly one element $y$ covering $x_i$ for which
    \[
    \operatorname{des}_{x_i}(y)=0,
    \]
    namely the element defining the unique increasing chain in
    $[x_i,\hat{1}]$, while
    \[
    \operatorname{des}_{x_i}(y)=1
    \]
    for every other element $y$ covering $x_i$. Thus, the descent condition is
    also uniform over all atoms. Therefore, $(P,\lambda)$ is a rank-uniform
    EL-labeled poset.
\end{proof}

Thus, for every bounded graded rank-uniform poset $P$ of rank $3$, we have
shown that it is sufficient to find an ordering of the atoms
\[
(x_1,x_2,\ldots,x_n)
\]
such that
\[
\operatorname{icov}(x_i)\neq\varnothing
\qquad\text{for every } 2\leq i\leq n,
\]
in order to construct an EL-labeling $\lambda$ for which $(P,\lambda)$ is
a rank-uniform EL-labeled poset.

Moreover, if the ordering satisfies the additional condition that
$|\operatorname{icov}(x_i)|$ is weakly increasing with $i$, then the same
construction yields a monotonic rank-uniform EL-labeling. Such an ordering
does not always exist, as demonstrated by the counterexample in
Figure~\ref{fig:counter-example-3}.

\begin{maintheorem}\label{theorem-2}
    Let $P$ be a bounded graded rank-uniform poset of rank $3$. If its atoms can be ordered as $(x_1,x_2,\ldots,x_n)$ so that
    \[ 0<|\operatorname{icov}(x_2)| \leq|\operatorname{icov}(x_3)| \leq\cdots\leq|\operatorname{icov}(x_n)|, \]
    Then $P$ is UMEL-shellable.
\end{maintheorem}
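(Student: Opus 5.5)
The plan is to reuse, without change, the labeling $\lambda$ built in the proof of Theorem~\ref{theorem-1} from the Atoms Insertion Algorithm~\ref{AIA}, taking the given ordering $(x_1,\ldots,x_n)$. The hypothesis $|\operatorname{icov}(x_2)|>0$ together with monotonicity gives $\operatorname{icov}(x_i)\neq\varnothing$ for every $2\leq i\leq n$. That is exactly what the proof of Theorem~\ref{theorem-1} needs. So $\lambda$ is an EL-labeling and $(P,\lambda)$ is a rank-uniform labeled poset. We take labels in $\mathbb{Q}$, so that all the strict inequalities required there can be met at once. What remains is monotonicity: for each rank $0\leq r\leq 2$, the function $\des_r(\medbullet)$ must be weakly increasing in the index of the label.

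We handle the three ranks separately, starting at the top.

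\emph{Rank $2$.} Each element $y$ of rank $2$ has the single cover $\hat{1}$. Then $\ell_2=1$ and $\des_y(\hat 1)=0$, so there is nothing to check.

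\emph{Rank $1$.} Fix $x_i$ and list the labels above it in increasing order. By construction the smallest label is $\lambda(x_i,y_{i_1})$. It is followed by the remaining $\operatorname{icov}$ labels, all of which lie below $\lambda(\hat0,x_i)$. Next come the $\operatorname{ocov}$ labels, all of which lie above $\lambda(\hat0,x_i)$. For $x_1$, the order is $\lambda(x_1,y_1)\vartriangleleft\lambda(x_1,y_2)\vartriangleleft\cdots$. In both cases the descent counts are $\des=0$ at index $1$ and $\des=1$ at every later index, as already shown in the proof of Theorem~\ref{theorem-1}. For the $\operatorname{ocov}$ elements this holds because $\lambda(y,\hat1)\vartriangleleft\lambda(\hat0,x_i)\vartriangleleft\lambda(x_i,y)$. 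The sequence $(0,1,\ldots,1)$ is weakly increasing.

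\emph{Rank $0$.} This is the case where the hypothesis is really used. The labels above $\hat0$ satisfy $\lambda(\hat0,x_1)\vartriangleleft\cdots\vartriangleleft\lambda(\hat0,x_n)$, so the index of $x_i$ is $i$ and each width equals $1$. We compute $\des_{\hat0}(x_i)$ as the number of labels above $x_i$ that are strictly smaller than $\lambda(\hat0,x_i)$.
\begin{itemize}
\item For $i=1$ this number is $0$, since every label above $x_1$ exceeds $\lambda(\hat0,x_1)$.
\item For $i\geq2$, every $\operatorname{icov}$ label above $x_i$ lies below $\lambda(\hat0,x_i)$, and every $\operatorname{ocov}$ label lies above it. Since the labels above $x_i$ are pairwise distinct, $\des_{\hat0}(x_i)=|\operatorname{icov}(x_i)|$.
\end{itemize}
Hence the sequence is $(0,|\operatorname{icov}(x_2)|,\ldots,|\operatorname{icov}(x_n)|)$, which is weakly increasing by hypothesis.

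The main obstacle is the rank-$0$ computation. One must check that the construction puts \emph{every} $\operatorname{icov}$ label strictly below $\lambda(\hat0,x_i)$, including $\lambda(x_i,y_{i_1})$, and every $\operatorname{ocov}$ label strictly above it, and that no ties occur. I would confirm this directly from the chains of inequalities written in the proof of Theorem~\ref{theorem-1}. With that in hand, the labeled poset is monotonic and rank-uniform, so $P$ is UMEL-shellable.
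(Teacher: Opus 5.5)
Your proposal is correct and follows the paper's approach. The paper justifies Theorem~\ref{theorem-2} only by remarking that the construction from the proof of Theorem~\ref{theorem-1}, run on the given ordering, yields a monotonic labeling, and you supply the verification it leaves implicit: every $\operatorname{icov}$ label above $x_i$ lies strictly below $\lambda(\hat{0},x_i)$ (including $\lambda(x_i,y_{i_1})\vartriangleleft\lambda(y_{i_1},\hat{1})\vartriangleleft\lambda(\hat{0},x_i)$), every $\operatorname{ocov}$ label lies strictly above it, and so $\des_{\hat{0}}(x_i)=|\operatorname{icov}(x_i)|$.
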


The preceding construction is closely related to Question~8.3 of
Coron, Ferroni, and Li~\cite{coron2025chow}. In rank $3$, it yields a
result that is stronger than the statement proposed in
Question~8.4, since it applies to a broader class of posets than
geometric lattices.

\begin{maintheorem}\label{theorem-3}
Let $P$ be a bounded graded rank-uniform poset of rank $3$. Suppose that,
for every ordering $(x_1,x_2,\ldots,x_n)$ of the atoms of $P$,
\[
\operatorname{icov}(x_i)\neq\varnothing
\qquad\text{for every }2\leq i\leq n.
\]
Then $P$ is UMEL-shellable.
\end{maintheorem}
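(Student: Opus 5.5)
The plan is to deduce the statement from Theorem~\ref{theorem-2}. It suffices to exhibit one ordering $(x_1,\ldots,x_n)$ of the atoms with
\[
0<|\operatorname{icov}(x_2)|\leq|\operatorname{icov}(x_3)|\leq\cdots\leq|\operatorname{icov}(x_n)|.
\]
Positivity is automatic, since by hypothesis \emph{every} ordering has $\operatorname{icov}(x_i)\neq\varnothing$ for $i\geq 2$. So the whole task is to build an ordering in which $|\operatorname{icov}(x_i)|$ is weakly increasing. I will do this greedily.

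For a set $A$ of already inserted atoms and an atom $x\notin A$, write $\operatorname{icov}_A(x)$ for the set of elements covering $x$ that also cover some atom of $A$. These are exactly the elements already present in the partial poset built from $A$ in the Atoms Insertion Algorithm~\ref{AIA}. Thus $\operatorname{icov}(x_i)=\operatorname{icov}_{\{x_1,\ldots,x_{i-1}\}}(x_i)$.

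The key observation is monotonicity: if $A\subseteq A'$ then $\operatorname{icov}_A(x)\subseteq\operatorname{icov}_{A'}(x)$.

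The greedy ordering is then defined as follows.
\begin{itemize}
\item Choose $x_1$ arbitrarily.
\item Having chosen $A_{i-1}=\{x_1,\ldots,x_{i-1}\}$, let $x_i$ be a remaining atom minimizing $|\operatorname{icov}_{A_{i-1}}(x)|$.
\end{itemize}
To check that this ordering is weakly increasing, fix $i\geq 2$ and compare steps $i$ and $i+1$. By monotonicity,
\[
|\operatorname{icov}(x_{i+1})|=|\operatorname{icov}_{A_i}(x_{i+1})|\geq|\operatorname{icov}_{A_{i-1}}(x_{i+1})|.
\]
Since $x_i$ was chosen as a minimizer at step $i$ and $x_{i+1}$ was also available then,
\[
|\operatorname{icov}_{A_{i-1}}(x_{i+1})|\geq|\operatorname{icov}_{A_{i-1}}(x_i)|=|\operatorname{icov}(x_i)|.
\]
Combining the two gives $|\operatorname{icov}(x_i)|\leq|\operatorname{icov}(x_{i+1})|$, so the required chain of inequalities holds. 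Applying Theorem~\ref{theorem-2} to this ordering shows that $P$ is UMEL-shellable.

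Since the argument is short, the real weight rests on Theorem~\ref{theorem-2}, that is, on checking that the labeling from the proof of Theorem~\ref{theorem-1} is monotonic when $|\operatorname{icov}(x_i)|$ is weakly increasing. That is the step I would expect to need care. In that labeling, above each atom $x_i$ the edge to $y_{i_1}$ has the smallest label and gives $\des_{x_i}(y_{i_1})=0$; every other cover gives descent count $1$. Above $\hat{0}$, the descent count of $x_i$ equals $|\operatorname{icov}(x_i)|-1$ for $i\geq 2$, because the $\operatorname{icov}$ edges other than $y_{i_1}$ lie below $\lambda(\hat{0},x_i)$, while $y_{i_1}$ and the $\operatorname{ocov}$ edges do not. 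The descent count of $x_1$ is $0$.

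Since the labels $\lambda(\hat{0},x_i)$ increase with $i$, monotonicity at rank $0$ is exactly the hypothesis of Theorem~\ref{theorem-2}. At rank $1$, the labels above $x_i$ must be ordered so that the label with descent count $0$ comes first, which the construction guarantees. At rank $2$, each $y$ has a single cover and there is nothing to check.

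Finally, the hypothesis covers Question~8.4. In a rank-$3$ geometric lattice any two atoms lie below a common rank-$2$ element, namely their join. Hence every ordering satisfies $\operatorname{icov}(x_i)\neq\varnothing$ for $i\geq 2$, and the theorem applies.
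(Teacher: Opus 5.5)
Your proof is correct and follows the paper's argument: order the atoms greedily by minimal $|\operatorname{icov}|$, use that $\operatorname{icov}$ only grows as atoms are inserted, and invoke Theorem~\ref{theorem-2}. Your explicit two-step inequality spells out what the paper states in one line. One small slip in your side remark on Theorem~\ref{theorem-2}: the construction also has $\lambda(x_i,y_{i_1})\vartriangleleft\lambda(\hat{0},x_i)$, which is forced because $\hat{0}\lessdot x_i\lessdot y_{i_1}$ cannot be a second increasing chain in $[\hat{0},y_{i_1}]$. So $\des_{\hat{0}}(x_i)=|\operatorname{icov}(x_i)|$ rather than $|\operatorname{icov}(x_i)|-1$, and this does not affect monotonicity.
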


\begin{proof}
We construct an ordering $(x_1,x_2,\ldots,x_n)$ of the atoms greedily.
Choose $x_1$ arbitrarily. Suppose that
$x_1,x_2,\ldots,x_{i-1}$ have already been chosen. Among the remaining
atoms, choose $x_i$ so that
\[
|\operatorname{icov}(x_i)|
\]
is minimal.

By hypothesis,
\[
|\operatorname{icov}(x_i)|>0
\qquad\text{for every }i\geq 2.
\]
Moreover, as more atoms are inserted, the quantity
$|\operatorname{icov}(s)|$ can only increase for each remaining atom $s$.
Since $x_i$ is chosen to minimize this quantity among all remaining atoms,
it follows that
\[
0<
|\operatorname{icov}(x_2)|
\leq
|\operatorname{icov}(x_3)|
\leq\cdots\leq
|\operatorname{icov}(x_n)|.
\]
Therefore, Theorem~\ref{theorem-2} implies that $P$ is UMEL-shellable.
\end{proof}

For a geometric lattice $\mathcal{L}$ of rank $3$, every pair of distinct
atoms $s_1,s_2$ has a common rank-$2$ upper cover, namely their join
$s_1\vee s_2$. Consequently, for every ordering
$(x_1,x_2,\ldots,x_n)$ of the atoms of $\mathcal{L}$,
\[
\operatorname{icov}(x_i)\neq\varnothing
\qquad\text{for every }2\leq i\leq n.
\]
Hence, Theorem~\ref{theorem-3} gives an affirmative answer to
Question~8.4 of Coron-Ferroni-Li~\cite{coron2025chow}.

\subsection*{AI Disclosure} 
Claude AI was used to assist in developing and running a computer program for searching for counterexamples. ChatGPT was used to assist with the generation of TikZ code for the figures, as well as with rephrasing and improving the formal presentation of the manuscript.
\bibliographystyle{amsalpha}
\bibliography{biblio}

\end{document}